\renewcommand{\thesubfigure}{\thefigure.\arabic{subfigure}}
\renewcommand{\p@subfigure}{}
\renewcommand{\@thesubfigure}{\thesubfigure:\hskip\subfiglabelskip}
\newtheorem{theorem}{Theorem}
\newtheorem{lemma}{Lemma}
\newtheorem{corollary}{Corollary}
\newtheorem{proposition}{Proposition}
\newtheorem{remark}{Remark}
\theoremstyle{definition}
\newtheorem{definition}{Definition}
\newtheorem{example}{Example}
\newtheorem{conjecture}{Conjecture}
\newcommand{\str}{\mbox{str}} 
\newcommand{\sheet}{\mbox{sheet}} 
\newcommand{\sh}{\mbox{wsh}} 
\newcommand{\strBUT}{\mbox{strBUT}} 
\newcommand{\near}{\delta} 
\newcommand{\dnear}{\delta_{\Phi}} 
\newcommand{\dfar}{{\not\delta}_{\Phi}} 
\newcommand{\notfar}{\mathop{\not{\delta}}\limits^{\doublewedge}} 
\newcommand{\dcap}{\mathop{\cap}\limits_{\Phi}} 
\newcommand{\sn}{\mathop{\delta}\limits^{\doublewedge}} 
\newcommand{\snd}{\mathop{\delta_{_{\Phi}}}\limits^{\doublewedge}} 
\newcommand{\Int}{\mbox{int}}
\newcommand{\sdfar}{\stackrel{\not{\text{\normalsize$\delta$}}}{\text{\tiny$\doublevee$}}_{_{\Phi}}} 
\newcommand{\sfar}{\stackrel{\not{\text{\normalsize$\delta$}}}{\text{\tiny$\doublevee$}}} 
\begin{document}

\title{String-Based Borsuk-Ulam Theorem}

\author[J.F. Peters]{J.F. Peters$^{\alpha}$}
\email{James.Peters3@umanitoba.ca}
\address{\llap{$^{\alpha}$\,}Computational Intelligence Laboratory,
University of Manitoba, WPG, MB, R3T 5V6, Canada and
Department of Mathematics, Faculty of Arts and Sciences, Ad\.{i}yaman University, 02040 Ad\.{i}yaman, Turkey}
\author[A. Tozzi]{A. Tozzi$^{\beta}$}
\address{\llap{$^{\beta}$\,} Department of Physics, Center for Nonlinear Science, University of North Texas, Denton, Texas 76203 and Computational Intelligence Laboratory, University of Manitoba, WPG, MB, R3T 5V6, Canada}
\thanks{The research has been supported by the Scientific and Technological Research
Council of Turkey (T\"{U}B\.{I}TAK) Scientific Human Resources Development
(BIDEB) under grant no: 2221-1059B211402463 and the Natural Sciences \&
Engineering Research Council of Canada (NSERC) discovery grant 185986.}

\subjclass[2010]{Primary 54E05 (Proximity structures); Secondary 37J05 (General Topology)}

\date{}

\dedicatory{Dedicated to Som Naimpally, Mathematician and D.I. Olive, F.R.S.}

\begin{abstract}
This paper introduces a string-based extension of the Borsuk-Ulam Theorem (denoted by strBUT).  A string is a region with zero width and either bounded or unbounded length on the surface of an $n$-sphere or a region of a normed linear space.  In this work, an $n$-sphere surface is covered by a collection of strings. For a strongly proximal continuous function on an $n$-sphere into $n$-dimensional Euclidean space, there exists a pair of antipodal $n$-sphere strings with matching descriptions that map into Euclidean space $\mathbb{R}^n$.  Each region $M$ of a string-covered $n$-sphere is a worldsheet (denoted by $\sh M$).  For a strongly proximal continuous mapping from a worldsheet-covered $n$-sphere to $\mathbb{R}^n$, strongly near antipodal worldsheets map into the same region in $\mathbb{R}^n$.  An application of strBUT is given in terms of the evaluation of Electroencephalography (EEG) patterns.
\end{abstract}

\keywords{Borsuk-Ulam Theorem, $n$-sphere, Region, Strong Proximity, String}

\maketitle

\section{Introduction}
In this paper, we consider a geometric structure that has the characteristics of a cosmological string $A$ (denoted $\str A$), which is the path followed by a particle moving through space. 

\setlength{\intextsep}{0pt}
\begin{wrapfigure}[10]{R}{0.35\textwidth}
\begin{minipage}{3.2 cm}
\centering
\includegraphics[width=35mm]{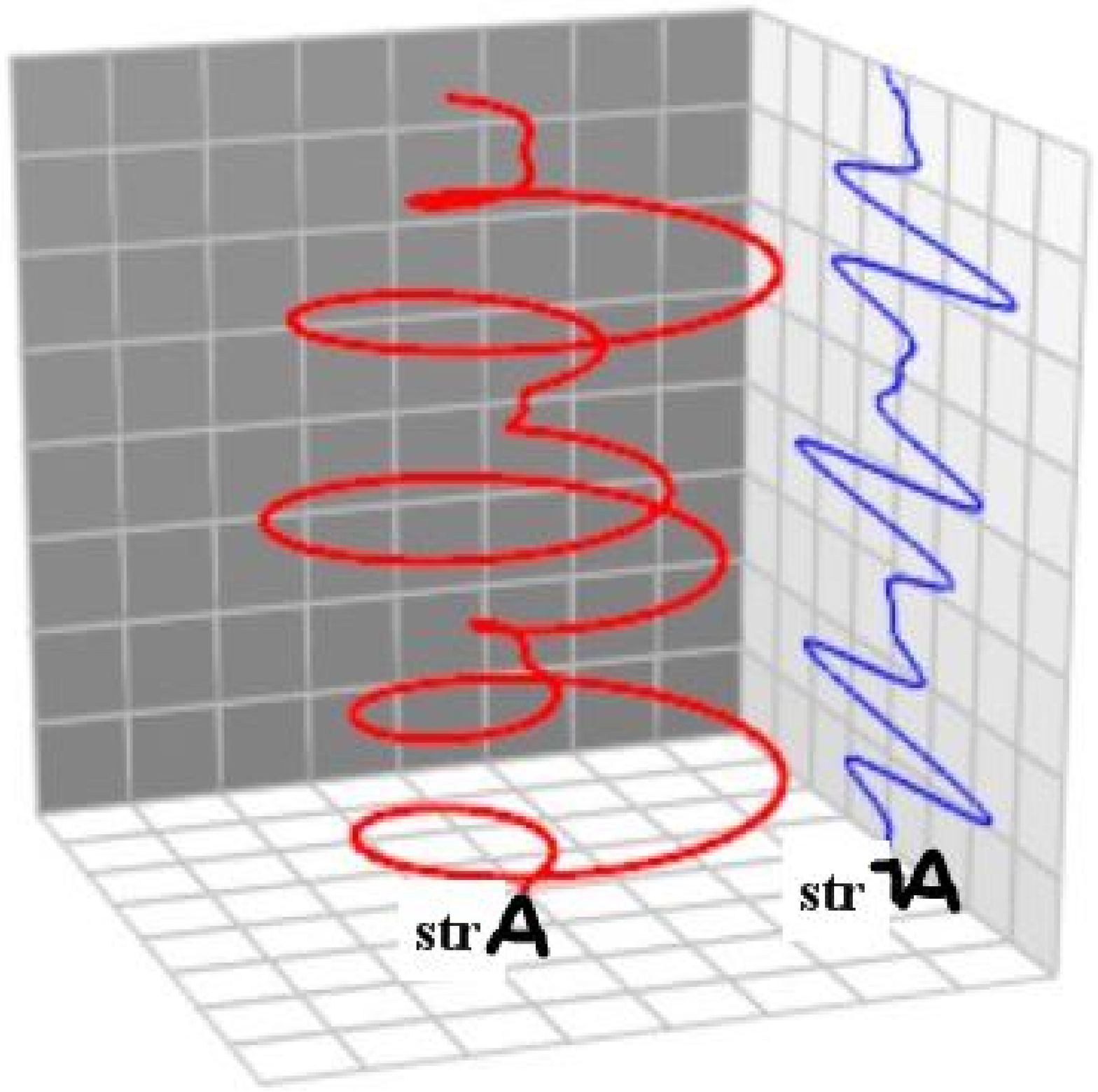}
\caption[]{Strings}
\label{fig:worldlinesABE}
\end{minipage}
\end{wrapfigure}

 A \emph{string} is a region of space with zero width and either bounded or unbounded length.  Another name for such a string is \emph{worldline}~\cite{OliveLandsberg1989stringTheory,Olive1988stringsAndSuperstrings,Olive1987algebrasAndStrings}.  Here, a string is a region on the surface of an $n$-sphere or in an $n$-dimensional normed linear space.  Every string is a spatial region, but not every spatial region is a string.  Strings that have some or no points in common are \emph{antipodal}.  

Examples of strings with finite length are shown in Fig.~\ref{fig:worldlinesABE}.  Regions $\str A,\righthalfcap \str A$ are examples of antipodal strings.
Various continuous mappings from an $n$-dimensional hypersphere $S^n$ to a feature space that is an $n$-dimensional Euclidean space $\mathbb{R}^n$ lead to a string-based incarnation of the Borsuk-Ulam Theorem, which is a remarkable finding about continuous mappings from antipodal points on an $n$-sphere into $\mathbb{R}^n$, found by K. Borsuk~\cite{Borsuk1933FMantipodes}.


The Borsuk-Ulam Theorem is given in the following form by M.C. Crabb and J. Jaworowski~\cite{Crabb2013BUT}.

\begin{theorem}\label{thm:Borsuk-Ulam1933one}{\bf Borsuk-Ulam Theorem}~{\rm\cite[Satz II]{Borsuk1933FMantipodes}}.\\
Let $f:S^n\longrightarrow \mathbb{R}^n$ be a continuous map.  There exists a point $x\in S^n\subseteq\mathbb{R}^{n+1}$ such that $f(x) = f(-x)$.  
\end{theorem}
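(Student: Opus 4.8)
The plan is to argue by contradiction and convert the conclusion $f(x)=f(-x)$ into the nonexistence of an antipode-preserving map into a lower sphere. Suppose $f(x)\neq f(-x)$ for every $x\in S^n$. Then $f(x)-f(-x)$ never vanishes, so normalizing produces a continuous map
\[
 g(x)=\frac{f(x)-f(-x)}{\norm{f(x)-f(-x)}}\colon S^n\longrightarrow S^{n-1},
\]
which is \emph{odd}: $g(-x)=-g(x)$ for all $x$. Conversely, any odd map $S^n\to S^{n-1}\subset\mathbb{R}^n$ supplies a counterexample to the theorem, so the whole statement is equivalent to the assertion that \emph{there is no continuous odd map $S^n\to S^{n-1}$}. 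I would spend the rest of the proof establishing this nonexistence.

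To rule out an odd $g\colon S^n\to S^{n-1}$, I would restrict it to the equatorial copy $S^{n-1}\subset S^n$, obtaining an odd self-map $g|_{S^{n-1}}\colon S^{n-1}\to S^{n-1}$. The step I expect to be the main obstacle is the classical degree lemma: \textbf{every continuous odd self-map of a sphere has odd (in particular nonzero) degree}, hence is not null-homotopic. Granting it, the contradiction is quick. The equator bounds the closed upper hemisphere, a contractible disk $D^n$, and $g$ is defined on all of $S^n\supset D^n$; since $g|_{S^{n-1}}$ therefore extends continuously over a disk, it must be null-homotopic, i.e. have degree $0$. An odd degree cannot equal $0$, so no such $g$ exists. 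The base case $n=1$ needs no degree theory at all: an odd continuous map $S^1\to S^0$ is impossible because $S^1$ is connected while $S^0$ is not.

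The degree lemma carries the genuine topological content, and I would prove it with $\mathbb{Z}/2$-equivariant rather than bare-hands homotopy methods. One clean route passes to quotients by the antipodal action: an odd self-map of $S^{n-1}$ descends to a self-map of real projective space $\mathbb{RP}^{n-1}$ that is nontrivial on $\pi_1$ (it pulls the connected double cover $S^{n-1}$ back to itself), hence acts as the identity on $H^1(\mathbb{RP}^{n-1};\mathbb{Z}/2)=\mathbb{Z}/2\cdot\alpha$. In the ring $H^*(\mathbb{RP}^{n-1};\mathbb{Z}/2)=(\mathbb{Z}/2)[\alpha]/(\alpha^n)$ this forces the top class $\alpha^{n-1}$ to map to itself, so the induced map on top cohomology is an isomorphism and the degree is odd. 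A transfer or Smith-theory comparison of the two fixed-point-free involutions yields the same conclusion. All the substance of Borsuk--Ulam is concentrated in this equivariant computation; the remaining steps are the formal reductions sketched above.
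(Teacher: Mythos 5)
Your proposal is correct, but it takes a genuinely different route from the paper, because the paper does not actually prove the theorem at all: its ``proof'' is a two-sentence citation, deferring to the 1930 covering result of Lusternik and Schnirelmann \cite{Lusternik1930BUT} (if $S^n$ is covered by $n+1$ closed sets, one of them contains a pair of antipodal points) and to Remark 3.4 of \cite{Crabb2013BUT} for how that result yields Borsuk--Ulam. You instead give the classical self-contained argument: normalize $f(x)-f(-x)$ to manufacture an odd map $g\colon S^n\to S^{n-1}$, observe that restricting to the equator gives an odd self-map of $S^{n-1}$ that extends over the upper hemisphere and hence is null-homotopic, and derive the contradiction from Borsuk's odd-mapping lemma (odd self-maps of spheres have odd degree), which you prove by descending to $\mathbb{RP}^{n-1}$ and computing in the ring $H^*(\mathbb{RP}^{n-1};\mathbb{Z}/2)\cong(\mathbb{Z}/2)[\alpha]/(\alpha^{n})$. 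Both reductions in your argument are sound, including the separate treatment of $n=1$ by connectedness; the only step you name rather than carry out is the transfer (or Gysin-sequence) naturality identifying the mod-$2$ degree of $g$ on $S^{n-1}$ with the action of the quotient map on $H^{n-1}(\mathbb{RP}^{n-1};\mathbb{Z}/2)$, which is standard and does close the argument. What the two approaches buy: yours makes the topological content explicit and self-contained modulo standard cohomology machinery, whereas the paper's citation-based route leans on the combinatorial--covering formulation, which is the form most directly connected to the Lusternik--Schnirelmann category ideas the cited literature develops; they are well known to be equivalent statements, so neither is more general, but yours is the one a reader could verify without leaving the page.
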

\begin{proof}
A proof of this form of the Borsuk-Ulam Theorem follows from a result given in 1930 by L.A. Lusternik and L. Schnirelmann~\cite{Lusternik1930BUT}.  For the backbone of the proof, see Remark 3.4 in~\cite{Crabb2013BUT}.
\end{proof}

\section{Preliminaries}
This section briefly introduces Petty antipodal sets that are strings and the usual continuous function required by BUT is replaced by a proximally continuous function.  In addition, we briefly consider strong descriptive proximities as well as two point-based variations of the Borsuk-Ulam Theorem (BUT) for topological spaces equipped with a descriptive proximity or a strong descriptive proximity.
$\mbox{}$\\
\vspace{3mm}

\begin{figure}[!ht]
\centering
\includegraphics[width=55mm]{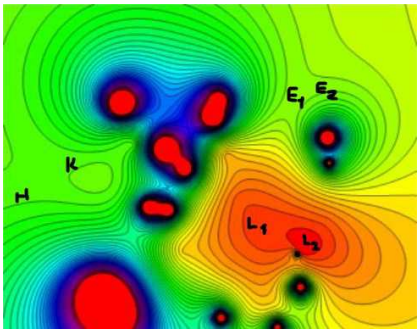}
\caption[]{Antipodal sets $H,K,E_1,E_2,L_1,L_2$ that are strings}
\label{fig:antipodesHKL}
\end{figure}

\subsection{Antipodal sets}
In considering a string-based BUT, we consider antipodal sets instead of antipodal points.  A subset $A$ of an $n$-dimensional real linear space $\mathbb{R}^n$ is a Petty \emph{antipodal set}, provided, for each pair of points $p,q\in A$, there exist disjoint parallel hyperplanes $P,Q$ such that $p\in P, q\in Q$~\cite[\S 2]{Petty1971antipodalSets}.  A \emph{hyperplane} is subspace $H = \left\{x\in \mathbb{R}^n: x\cdot v = c, v\in S^{n-1},c\in\mathbb{R}\right\}$ of a vector space~\cite{CoxMcKelvey1984hyperplane}.  In general, a hyperplane is any codimension-1 subspace of a vector space~\cite{Weisstein2016hyperplane}. 

\begin{figure}[!ht]
\centering
\subfigure[bounded worldsheet]
 {\label{fig:1worldsheet}\includegraphics[width=85mm]{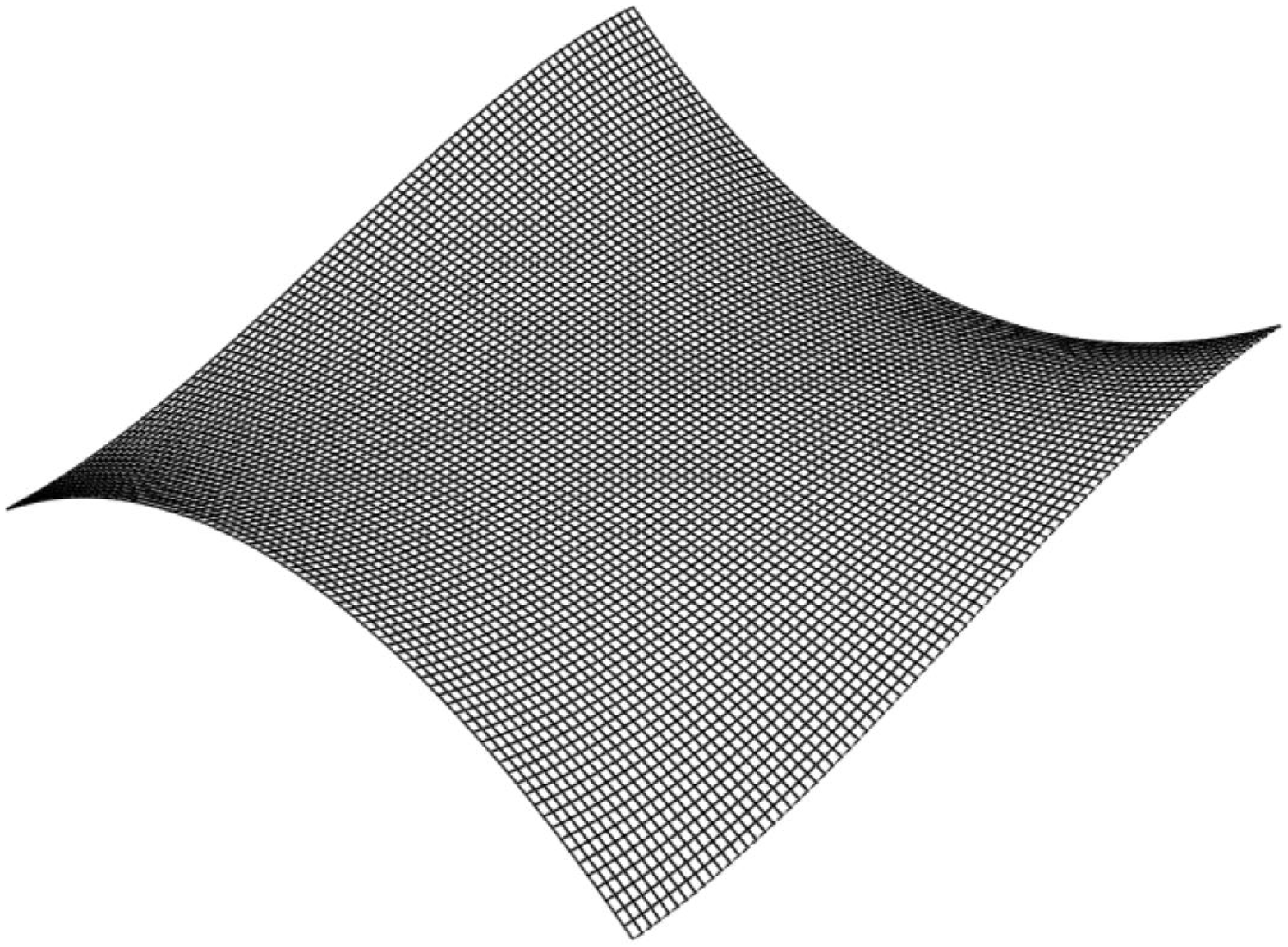}}\hfil
\subfigure[rolled worldsheet]
 {\label{fig:2worldsheet}\includegraphics[width=30mm]{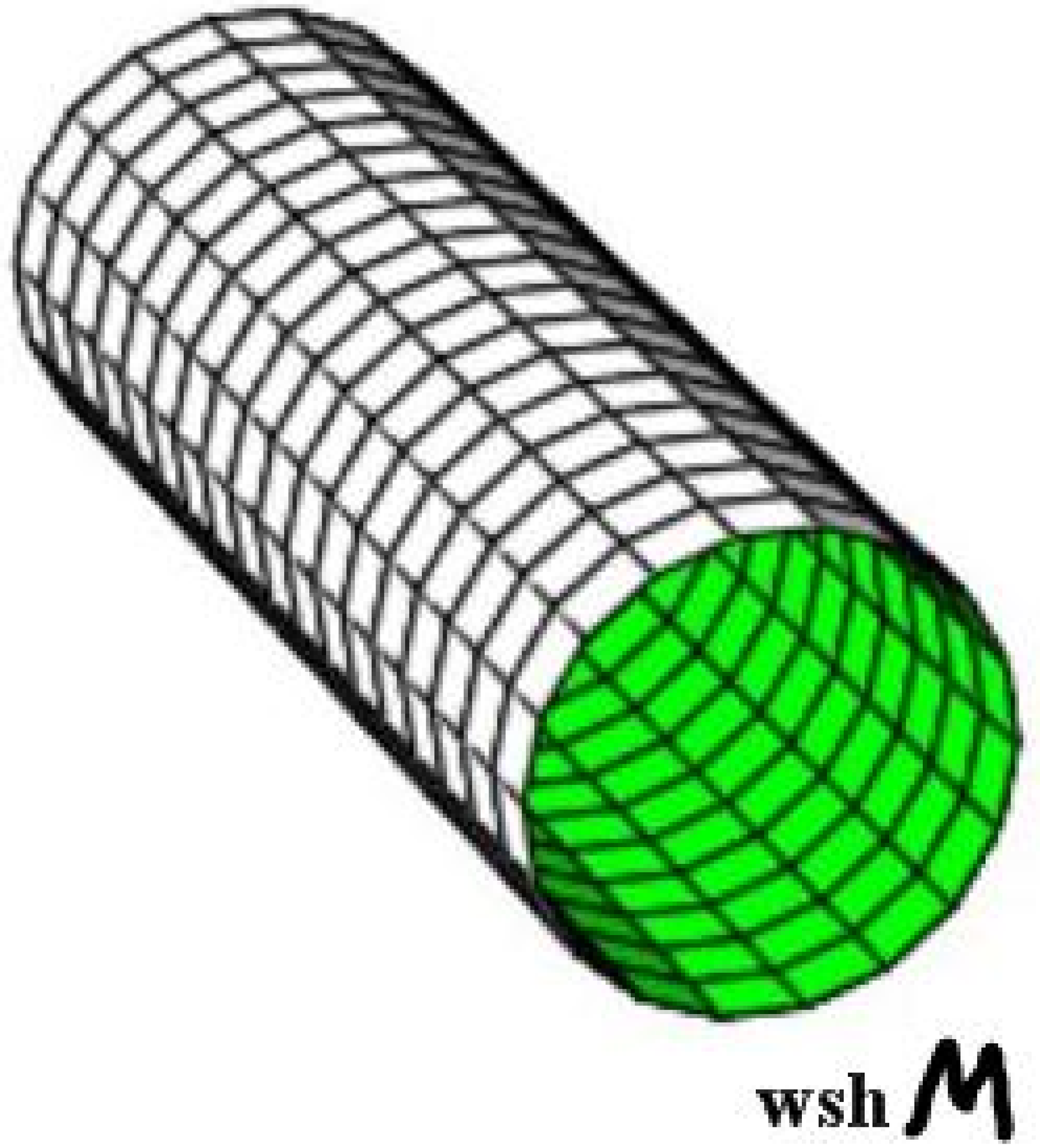}}
\caption[]{Bounded worldsheet $\mapsto$ cylinder surface}
\label{fig:corticalRegions}
\end{figure}
$\mbox{}$\\
\vspace{2mm}


Let $X$ be an $n$-dimensional Euclidean space $\mathbb{R}^n$ such that every member of $X$ is a string and let $S,\righthalfcap S\subset X$.  $A\in S,\righthalfcap A\in \righthalfcap S$ are \emph{antipodal sets}, provided $A,\righthalfcap A$ are subsets of disjoint parallel hyperplanes.  For example, strings are antipodal, provided the strings are subsets of disjoint parallel hyperplanes.  In this work, we relax the Petty antipodal set parallel hyperplane requirement.  That is, a pair of spatial regions (\emph{aka}, strings) $\str A,\righthalfcap \str A$ are \emph{antipodal}, provided the regions or strings contain proper substrings of disjoint parallel hyperplanes, {\em i.e.}, antipodes $A,\righthalfcap A$ contain proper substrings $\str A'\subset A,\righthalfcap \str A'\subset\righthalfcap A$ such that $\str A'\cap \righthalfcap \str A' = \emptyset$.  

\setlength{\intextsep}{0pt}
\begin{wrapfigure}[9]{R}{0.35\textwidth}
\begin{minipage}{3.2 cm}
\centering
\includegraphics[width=35mm]{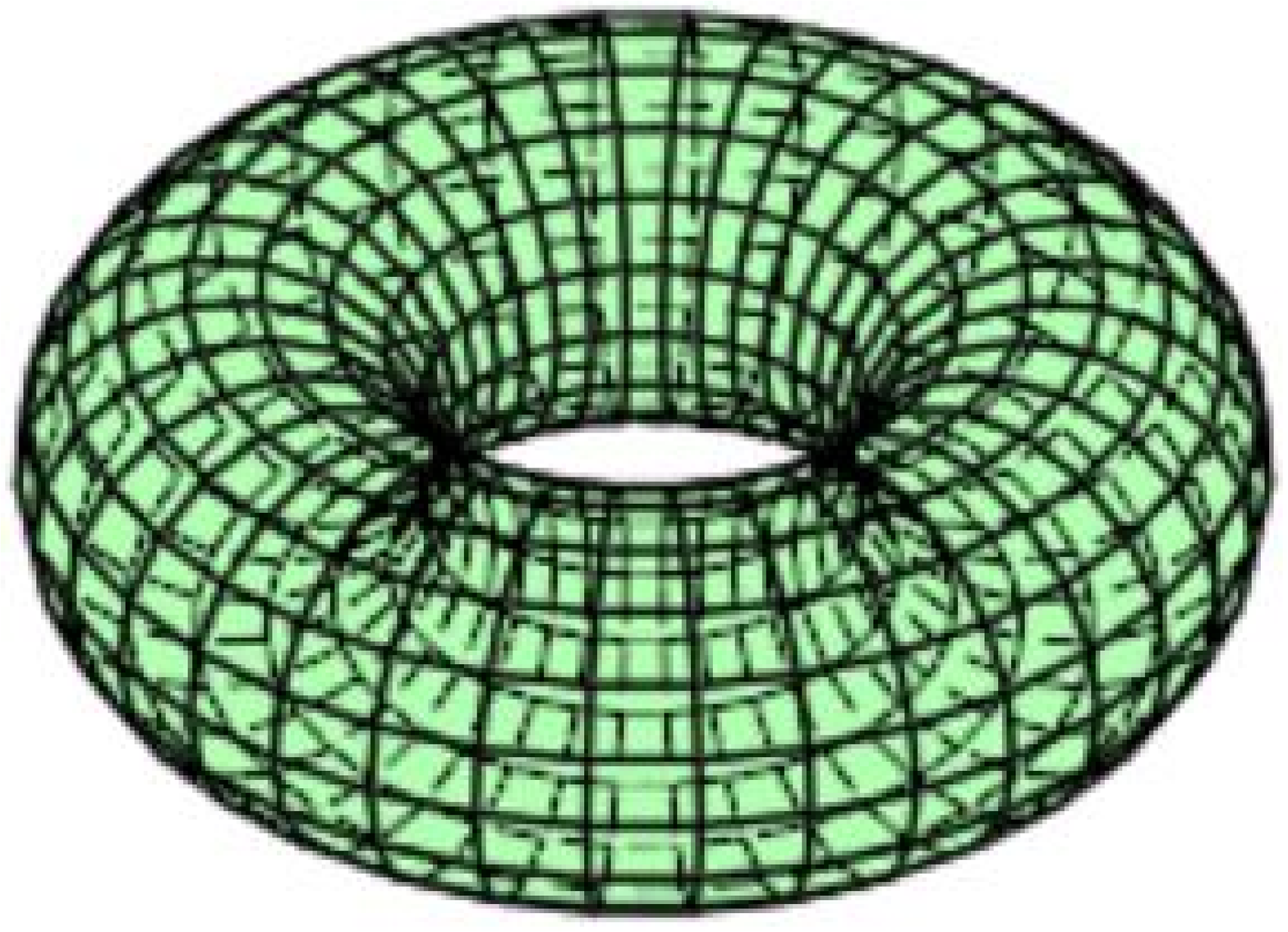}
\caption[]{\footnotesize \bf Torus}
\label{fig:torus}
\end{minipage}
\end{wrapfigure}

In other words, a pair of strings $\str A,\righthalfcap \str A$ are \emph{antipodal}, provided there are points $p\in \str A, q\in \righthalfcap \str A$ that belong to disjoint hyperplanes. 

\begin{example}
Let $\str H,\str K,\str E_1,\str E_2,\str L_1,\str L_2$ in Fig.~\ref{fig:antipodesHKL} represent strings.  Then
\begin{align*}
\str H,\str K &\mbox{are antipodal, since}\ \str H\cap \str K = \emptyset,\\
\str H,\str E_1\cup \str E_2 &\mbox{are antipodal, since}\ \str H\cap \left(\str E_1\cup \str E_2\right) = \emptyset,\\
\str H,\str L_1\cup L_2 &\mbox{are antipodal, since}\ \str H\cap \left(\str L_1\cup \str L_2\right) = \emptyset,\\
\str E_1,\str L_1 &\mbox{are antipodal, since}\ \str E_1\cap \str L_1 = \emptyset,\\
\str E_1,\str E_2 &\mbox{are antipodal, since}\ X\cap Y \neq \emptyset,\ \mbox{for}\ X\in 2^{\str E_1},Y\in 2^{\str E_2},\\
\str L_1,\str L_2 &\mbox{are antipodal, since}\ X\cap Y \neq \emptyset,\ \mbox{for}\ X\in 2^{\str L_1},Y\in 2^{\str L_2}.
\end{align*}
By definition, $\str E_1,\str E_2$ and $\str L_1,\str L_2$ are antipodal, since
\begin{align*}
\left(\str E_1\cup \str E_2\setminus \str E_1\cap \str E_2\right) &\neq \emptyset,\ \mbox{and}\\
\left(\str L_1\cup \str L_2\setminus \str L_1\cap \str L_2\right) &\neq \emptyset.\mbox{\qquad \textcolor{blue}{$\blacksquare$}}
\end{align*}
\end{example} 

In a point-free geometry~\cite{DiConcilio2013mcs,Concilio2006PointFreeGeometry}, regions (nonempty sets) replace points as the primitives.  Let $M$ be a nonempty region of a space $X$.  Region $M$ is a \emph{worldsheet} (denoted by $\sh M$), provided every subregion of $\sh M$ contains at least one string.  In other words, a worldsheet is completely covered by strings.  Let $X$ be a collection of strings.  $X$ is a \emph{cover} for $\sh M$, provided $\sh M\subseteq X$.  Every member of a worldsheet is a string.

\begin{example}\label{ex:rolledUpWorldsheet}
\includegraphics[width=30mm]{1worldsheet}{\large $\boldsymbol{\mapsto}$}\includegraphics[width=10mm]{2worldsheet} A worldsheet $\sh M$ with finite width and finite length is represented in Fig.~\ref{fig:1worldsheet}.  This worldsheet is rolled up to form the lateral surface of a cylinder represented in Fig.~\ref{fig:2worldsheet}, namely, $2\pi rh$ with radius $r$ and height $h$ equal to the length of $\sh M$.  We call this a worldsheet cylinder.  In effect, a flattened, bounded worldsheet maps to a worldsheet cylinder. \qquad \textcolor{blue}{$\blacksquare$}
\end{example}

The rolled up world sheet in Example~\ref{ex:rolledUpWorldsheet} is called a worldsheet cylinder surface.
Let $\sh M,\righthalfcap \sh M$ be a pair of worldsheets.  $\sh M,\righthalfcap \sh M$ are antipodal, provided there is at least one string $\str A\in \sh M, \str B\in \righthalfcap \sh M$ such that $\str A\cap \str B = \emptyset$.  A worldsheet cylinder maps to a worldsheet torus.

\begin{example}\label{ex:bendedUpWorldsheetTorus}
\includegraphics[width=10mm]{2worldsheet}{\large $\boldsymbol{\mapsto}$}\includegraphics[width=15mm]{torus} A worldsheet torus $\sh M$ with finite radius and finite length is represented in Fig.~\ref{fig:torus}.  This worldsheet torus is formed by bending a worldsheet cylinder until the ends meet.  In effect, a flattened, a worldsheet cylinder maps to worldsheet torus. \qquad \textcolor{blue}{$\blacksquare$}
\end{example}

\begin{conjecture}
A bounded worldsheet cylinder is homotopically equivalent to a worldsheet torus.
\qquad \textcolor{blue}{$\blacksquare$}
\end{conjecture}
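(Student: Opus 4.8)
The plan is to set up the standard models for the two objects and then test the bending construction of Example~\ref{ex:bendedUpWorldsheetTorus} directly. A bounded worldsheet cylinder is, as a space, the lateral surface $C \cong S^1 \times [0,h]$ produced by rolling a flat bounded worldsheet (Example~\ref{ex:rolledUpWorldsheet}), and the worldsheet torus is obtained from $C$ by ``bending until the ends meet,'' i.e. by the identification $(\theta,0)\sim(\theta,h)$, giving $T \cong S^1 \times S^1$. The natural candidate for the equivalence is the bending map $q\colon C \longrightarrow T$, $q(\theta,t) = (\theta,\, t \bmod h)$, together with a section. First I would verify that $q$ is continuous, surjective, and string-preserving: each circular string $S^1\times\{t\}$ of $C$ maps to a circular string of $T$, and each longitudinal string $\{\theta\}\times[0,h]$ maps onto a longitudinal loop-string of $T$. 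This shows $q$ is a morphism of worldsheet-covered regions, so the covers match up as required by the definition of $\sh M$. I would then try to produce a homotopy inverse $s\colon T \longrightarrow C$ together with homotopies $q\circ s \simeq \mathrm{id}_T$ and $s\circ q \simeq \mathrm{id}_C$ realized through string-preserving maps.

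The hard part, and the main obstacle, is that no such inverse exists in the classical homotopy category: the cylinder $C$ deformation retracts onto a single meridian circle $S^1\times\{h/2\}$, so $\pi_1(C)\cong\mathbb{Z}$, whereas $\pi_1(T)\cong\mathbb{Z}\oplus\mathbb{Z}$; a homotopy equivalence must induce an isomorphism of fundamental groups, which is impossible since $\mathbb{Z}\not\cong\mathbb{Z}\oplus\mathbb{Z}$. This is precisely why the statement is flagged as a conjecture rather than a theorem, and it tells us the proof cannot be carried out in the ordinary topological sense. The argument must instead be transported into the point-free, strongly proximal worldsheet category, in which the bounded longitudinal coordinate $t\in[0,h]$ is endowed with the periodic string structure induced by its cover, so that the seam $(\theta,0)\sim(\theta,h)$ is closed by a deformation through worldsheets rather than by a quotient that alters the homotopy type.

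The real content of the argument is therefore definitional. I would fix a notion of string-preserving homotopy equivalence on worldsheet-covered regions satisfying two requirements: (i) it is a genuine equivalence relation, and (ii) the closing-up of the longitudinal string into a loop is admitted as an allowable deformation. Once such a notion is in place, the bending map $q$ and a chosen section become mutually inverse up to string-preserving homotopy, and the conjecture follows. Calibrating this equivalence correctly, making it weak enough to identify $C$ with $T$ yet strong enough to remain a meaningful equivalence compatible with the strong-proximity structure used elsewhere in the paper, is where the difficulty lies, and I would expect the bulk of the work to be the verification of requirement (i) for the deformation exhibited in requirement (ii).
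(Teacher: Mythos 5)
The statement you set out to prove is explicitly labelled a \emph{conjecture} in the paper: the authors give no proof at all, so there is nothing on their side to compare your argument against. Judged on its own merits, the most valuable part of your proposal is the classical obstruction in your second paragraph, and it is correct: with the standard models $C \cong S^1\times[0,h]$ and $T\cong S^1\times S^1$, the cylinder deformation retracts onto a circle, so $\pi_1(C)\cong\mathbb{Z}$ while $\pi_1(T)\cong\mathbb{Z}\oplus\mathbb{Z}$, and since a homotopy equivalence induces an isomorphism on fundamental groups, no such equivalence exists. Written up carefully, this is a \emph{disproof} of the conjecture under the ordinary meaning of ``homotopically equivalent,'' which is worth stating plainly rather than burying as an ``obstacle.''

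The genuine gap is in your third paragraph, where the argument becomes circular. You propose to repair the statement by inventing a ``string-preserving homotopy equivalence'' calibrated precisely so that the bending map $q$ admits an inverse up to that equivalence. That is not a proof of the conjecture; it is a replacement of the conjecture by a different statement about a relation you have not defined. Concretely: you never say what a string-preserving homotopy is, never verify your requirement (i) that the relation is transitive and symmetric, and never confront the collapse problem --- any relation permissive enough to identify two spaces with non-isomorphic fundamental groups must be shown not to identify everything (if closing the seam $(\theta,0)\sim(\theta,h)$ is an admissible deformation, why is collapsing the interval factor $[0,h]$ to a point not admissible, which would make the ``theorem'' vacuous?). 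Until such a relation is exhibited, shown to be an equivalence, and shown to be non-trivial, the conjecture is exactly as unresolved after your argument as before it; what you have actually established is that it is false as stated and that any true version requires changing the definition, not supplying a missing proof.
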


\subsection{Strong Descriptive proximity}
The descriptive proximity $\delta_{\Phi}$ was introduced in~\cite{Peters2012ams}.   Let $A,B \subset X$ and let $\Phi(x)$ be a feature vector for $x\in X$, a nonempty set of non-abstract points such as picture points.  $A\ \delta_{\Phi}\ B$ reads $A$ is descriptively near $B$, provided $\Phi(x) = \Phi(y)$ for at least one pair of points, $x\in A, y\in B$.  From this, we obtain the description of a set and the descriptive intersection~\cite[\S 4.3, p. 84]{Naimpally2013} of $A$ and $B$ (denoted by $A\ \dcap\ B$) defined by
\begin{description}
\item[{\rm\bf ($\boldsymbol{\Phi}$)}] $\Phi(A) = \left\{\Phi(x)\in\mathbb{R}^n: x\in A\right\}$, set of feature vectors.
\item[{\rm\bf ($\boldsymbol{\dcap}$)}]  $A\ \dcap\ B = \left\{x\in A\cup B: \Phi(x)\in \Phi(A)\ \mbox{and}\ \Phi(x)\in \Phi(B)\right\}$.
\qquad \textcolor{blue}{$\blacksquare$}
\end{description}
Then swapping out $\near$ with $\dnear$ in each of the Lodato axioms defines a descriptive Lodato proximity. 

That is, a \textit{descriptive Lodato proximity $\dnear$} is a relation on the family of sets $2^X$, which satisfies the following axioms for all subsets $A, B, C $ of $X$.\\

\begin{description}
\item[{\rm\bf (dP0)}] $\emptyset\ \dfar\ A, \forall A \subset X $.
\item[{\rm\bf (dP1)}] $A\ \dnear\ B \Leftrightarrow B\ \dnear\ A$.
\item[{\rm\bf (dP2)}] $A\ \dcap\ B \neq \emptyset \Rightarrow\ A\ \dnear\ B$.
\item[{\rm\bf (dP3)}] $A\ \dnear\ (B \cup C) \Leftrightarrow A\ \dnear\ B $ or $A\ \dnear\ C$.
\item[{\rm\bf (dP4)}] $A\ \dnear\ B$ and $\{b\}\ \dnear\ C$ for each $b \in B \ \Rightarrow A\ \dnear\ C$. \qquad \textcolor{blue}{$\blacksquare$}
\end{description}
\vspace{3mm}
$\emptyset \dfar A$ reads the empty set is descriptively far from $A$.  Further $\dnear$ is \textit{descriptively separated }, if 
\vspace{3mm}
\begin{description}
\item[{\rm\bf (dP5)}] $\{x\}\ \dnear\ \{y\} \Rightarrow \Phi(x) = \Phi(y)$ ($x$ and $y$ have matching descriptions). \qquad \textcolor{blue}{$\blacksquare$}
\end{description}
\vspace{3mm}


\begin{proposition}\label{prop:dnear}~\cite{Peters2016arXivNerves}
Let $\left(X,\dnear\right)$ be a descriptive proximity space, $A,B\subset X$.  Then $A\ \dnear\ B \Rightarrow A\ \dcap\ B\neq \emptyset$.
\end{proposition}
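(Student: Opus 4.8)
The plan is to unwind the two definitions and simply exhibit a witness for the descriptive intersection; no clever construction is needed. First I would expand the hypothesis using the definition of the descriptive proximity: $A\ \dnear\ B$ means precisely that $\Phi(x) = \Phi(y)$ for at least one pair of points $x\in A$ and $y\in B$. I would fix such a pair $x,y$.

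Next I would verify that this fixed $x$ satisfies the three defining conditions for membership in $A\ \dcap\ B = \left\{z\in A\cup B: \Phi(z)\in\Phi(A)\ \text{and}\ \Phi(z)\in\Phi(B)\right\}$. Since $x\in A\subseteq A\cup B$, the containment condition holds. Since $x\in A$, the definition of $\Phi(A)$ as the set of feature vectors of points of $A$ gives $\Phi(x)\in\Phi(A)$ immediately. Finally, because $\Phi(x)=\Phi(y)$ with $y\in B$, we obtain $\Phi(x)=\Phi(y)\in\Phi(B)$. Hence $x$ meets both ``$\Phi(x)\in\Phi(A)$'' and ``$\Phi(x)\in\Phi(B)$'', so $x\in A\ \dcap\ B$, and therefore $A\ \dcap\ B\neq\emptyset$.

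I expect no real obstacle here: the statement is essentially the converse of axiom \textbf{(dP2)}, and the argument is a direct definitional check rather than a genuine proof. The only point that demands a little care is the bookkeeping with the feature map $\Phi$ — one must keep straight that $\Phi(A)$ collects the descriptions of \emph{all} points of $A$, so that membership of the witness point in $A$ automatically forces its description into $\Phi(A)$ (and symmetrically for $B$). I would also remark that, combined with \textbf{(dP2)}, this yields the equivalence $A\ \dnear\ B \Leftrightarrow A\ \dcap\ B\neq\emptyset$, since both sides are characterized by the existence of a pair of equidescriptive points, one drawn from each set.
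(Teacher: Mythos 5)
Your proof is correct and is exactly the argument behind this proposition: the paper itself states it with only a citation to \cite{Peters2016arXivNerves} and supplies no proof, and the cited proof is precisely your witness argument (fix $x\in A$, $y\in B$ with $\Phi(x)=\Phi(y)$, then check $x\in A\ \dcap\ B$). One point you handle correctly but should keep explicit: the argument must use the concrete $\Phi$-based definition of $\dnear$ rather than only the Lodato axioms {\rm\bf (dP0)}--{\rm\bf (dP4)}, since the implication is not derivable from those axioms alone (axiom {\rm\bf (dP2)} gives only the converse, and an indiscrete relation on a space with injective $\Phi$ satisfies all the axioms while violating the conclusion).
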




Nonempty sets $A,B$ in a topological space $X$ equipped with the relation $\sn$, are \emph{strongly near} [\emph{strongly contacted}] (denoted $A\ \sn\ B$), provided the sets have at least one point in common.   The strong contact relation $\sn$ was introduced in~\cite{Peters2015JangjeonMSstrongProximity} and axiomatized in~\cite{PetersGuadagni2015stronglyNear},~\cite[\S 6 Appendix]{Guadagni2015thesis}.

Let $X$ be a topological space, $A, B, C \subset X$ and $x \in X$.  The relation $\sn$ on the family of subsets $2^X$ is a \emph{strong proximity}, provided it satisfies the following axioms.

\begin{description}
\item[{\rm\bf (snN0)}] $\emptyset\ \sfar\ A, \forall A \subset X $, and \ $X\ \sn\ A, \forall A \subset X$.
\item[{\rm\bf (snN1)}] $A \sn B \Leftrightarrow B\ \sn\ A$.
\item[{\rm\bf (snN2)}] $A\ \sn\ B$ implies $A\ \cap\ B\neq \emptyset$. 
\item[{\rm\bf (snN3)}] If $\{B_i\}_{i \in I}$ is an arbitrary family of subsets of $X$ and  $A\ \sn\ B_{i^*}$ for some $i^* \in I \ $ such that $\Int(B_{i^*})\neq \emptyset$, then $A\ \sn\ (\bigcup_{i \in I} B_i)$ 
\item[{\rm\bf (snN4)}]  $\mbox{int}A\ \cap\ \mbox{int} B \neq \emptyset \Rightarrow A\ \sn\ B$.  
\qquad \textcolor{blue}{$\blacksquare$}
\end{description}

\noindent When we write $A\ \sn\ B$, we read $A$ is \emph{strongly near} $B$ ($A$ \emph{strongly contacts} $B$).  The notation $A\ \notfar\ B$ reads $A$ is not strongly near $B$ ($A$ does not \emph{strongly contact} $B$). For each \emph{strong proximity} (\emph{strong contact}), we assume the following relations:
\begin{description}
\item[{\rm\bf (snN5)}] $x \in \Int (A) \Rightarrow x\ \sn\ A$ 
\item[{\rm\bf (snN6)}] $\{x\}\ \sn\ \{y\}\ \Leftrightarrow x=y$  \qquad \textcolor{blue}{$\blacksquare$} 
\end{description}

For strong proximity of the nonempty intersection of interiors, we have that $A \sn B \Leftrightarrow \Int A \cap \Int B \neq \emptyset$ or either $A$ or $B$ is equal to $X$, provided $A$ and $B$ are not singletons; if $A = \{x\}$, then $x \in \Int(B)$, and if $B$ too is a singleton, then $x=y$. It turns out that if $A \subset X$ is an open set, then each point that belongs to $A$ is strongly near $A$.  The bottom line is that strongly near sets always share points, which is another way of saying that sets with strong contact have nonempty intersection.\\

The descriptive strong proximity $\snd$ is the descriptive counterpart of $\sn$.
To obtain a \emph{descriptive strong Lodato proximity} (denoted by \emph{\bf dsn}), we swap out $\dnear$ in each of the descriptive Lodato axioms with the descriptive strong proximity $\snd$.  

Let $X$ be a topological space, $A, B, C \subset X$ and $x \in X$.  The relation $\snd$ on the family of subsets $2^X$ is a \emph{descriptive strong Lodato proximity}, provided it satisfies the following axioms.
\vspace{2mm}
\begin{description}
\item[{\rm\bf (dsnP0)}] $\emptyset\ {\sdfar}\ A, \forall A \subset X $, and \ $X\ \snd\ A, \forall A \subset X$.
\item[{\rm\bf (dsnP1)}] $A\ \snd\ B \Leftrightarrow B\ \snd\ A$.
\item[{\rm\bf (dsnP2)}] $A\ \snd\ B$ implies $A\ \dcap\ B\neq \emptyset$.  
\item[{\rm\bf (dsnP4)}] $\mbox{int}A\ \dcap\ \mbox{int} B \neq \emptyset \Rightarrow A\ \snd\ B$.  
\qquad \textcolor{blue}{$\blacksquare$}
\end{description}

\noindent When we write $A\ \snd\ B$, we read $A$ is \emph{descriptively strongly near} $B$.
For each \emph{descriptive strong proximity}, we assume the following relations:
\begin{description}
\item[{\rm\bf (dsnP5)}] $\Phi(x) \in \Phi(\Int (A)) \Rightarrow x\ \snd\ A$. 
\item[{\rm\bf (dsnP6)}] $\{x\}\ \snd\ \{y\} \Leftrightarrow \Phi(x) = \Phi(y)$.  
\qquad \textcolor{blue}{$\blacksquare$} 
\end{description}

\begin{definition}
Suppose that $(X, \tau_X, {\sn}_X) $ and $(Y, \tau_Y, {\sn}_Y)$ are topological spaces endowed with strong proximities~\cite{PetersGuadagni2015strongConnectedness}.  We say that the map $f: X \rightarrow Y$ is \emph{strongly proximal continuous} and we write \emph{\bf s.p.c.} if and only if, for $A, B \subset X$, 
\[
\ A\ {\sn}_X\ B \Rightarrow f(A)\ {\sn}_Y\ f(B). \mbox{\qquad \textcolor{black}{$\blacksquare$}}
\] 
\end{definition}

\begin{theorem}\label{open}~{\rm \cite{PetersGuadagni2015strongConnectedness}}
Suppose that $(X, \tau_X, {\sn}_X) $ and $(Y, \tau_Y, {\sn}_Y)$ are topological spaces endowed with compatible strong proximities and $f: X \rightarrow Y$ is s.p.c. Then $f$ is an open mapping, {\em i.e.}, $f$ maps open sets in open sets.
\end{theorem}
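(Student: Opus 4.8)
The plan is to establish openness of $f(U)$, for an arbitrary open set $U\subseteq X$, by verifying a single pointwise condition: that each image point lies in the interior of $f(U)$. The bridge between ``openness'' and the strong proximity $\sn$ is the point-to-set characterization recorded just after the axioms, namely that for a point $y$ and a non-singleton set $V$ one has $\{y\}\ {\sn}_Y\ V$ precisely when $y\in\Int V$. The compatibility of ${\sn}_Y$ with $\tau_Y$ is exactly what guarantees this equivalence. Hence it suffices to produce, for every $x\in U$, the strong-nearness relation $\{f(x)\}\ {\sn}_Y\ f(U)$, and then to read off $f(x)\in\Int f(U)$.

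First I would fix an open set $U\subseteq X$ and a point $x\in U$. Since $U$ is open, $x\in\Int U$, so axiom \textbf{(snN5)} yields $\{x\}\ {\sn}_X\ U$. Applying the hypothesis that $f$ is s.p.c.\ to the pair $A=\{x\}$, $B=U$ gives $f(\{x\})\ {\sn}_Y\ f(U)$; because $f(\{x\})=\{f(x)\}$, this is exactly $\{f(x)\}\ {\sn}_Y\ f(U)$.

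Next I would invoke compatibility in the target space: from $\{f(x)\}\ {\sn}_Y\ f(U)$ the point-to-set characterization delivers $f(x)\in\Int f(U)$. As $x$ ranges over all of $U$, every point of $f(U)$ lies in $\Int f(U)$, whence $f(U)\subseteq\Int f(U)$ and therefore $f(U)=\Int f(U)$ is open. Since $U$ was arbitrary, $f$ carries open sets to open sets, which is the claim.

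The step I expect to be the crux is the passage $\{f(x)\}\ {\sn}_Y\ f(U)\Rightarrow f(x)\in\Int f(U)$: this is not a formal consequence of the bare axioms \textbf{(snN0)}--\textbf{(snN6)}, but rather the content of the word \emph{compatible}, which ties the strong proximity to the ambient topology through the interior operator. A secondary point demanding care is the singleton/whole-space caveat attached to the characterization; one must treat separately the degenerate cases in which $f(U)$ equals $Y$, which is trivially open, or reduces to a single point, so that the equivalence $\{f(x)\}\ {\sn}_Y\ f(U)\Leftrightarrow f(x)\in\Int f(U)$ is invoked only where it is valid.
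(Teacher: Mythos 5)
The paper itself contains no proof of Theorem~\ref{open}: the result is imported by citation from \cite{PetersGuadagni2015strongConnectedness}. Your argument is exactly the proof given in that reference --- use (snN5) in the source space to get $\{x\}\ {\sn}_X\ U$ for $x\in U$ open, push it through the s.p.c.\ hypothesis to get $\{f(x)\}\ {\sn}_Y\ f(U)$, and then invoke the compatibility characterization (the clause ``if $A=\{x\}$, then $x\in \Int(B)$'' recorded after the axioms) to conclude $f(x)\in\Int f(U)$, hence $f(U)\subseteq\Int f(U)$. So your approach is the intended one, and it is correct to the same extent as the original.

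One substantive remark: the singleton caveat you flag at the end is not merely a case to be ``treated separately''; it is a genuine limitation of the theorem itself. A constant map $f:X\rightarrow Y$, $f\equiv c$, is s.p.c., since any $A\ {\sn}_X\ B$ maps to $\{c\}\ {\sn}_Y\ \{c\}$, which holds by (snN6); yet its image $\{c\}$ need not be open in $Y$. So the degenerate case you set aside (where $f(U)$ is a singleton) is in fact a counterexample to the statement as written, not a gap that a more careful argument could close --- and neither your proof nor the one in the cited source covers it. Your write-up is the more honest of the two for having isolated precisely where the argument needs $f(U)$ to be non-degenerate.
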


Let $\righthalfcap x$ be any point in $X\setminus \left\{x\right\}$ that is not $x\in X$.  The Borsuk-Ulam Theorem (BUT) has many different region-based incarnations.

\begin{theorem}\label{thm:Borsuk}~{\rm \cite{PetersGuadagni2015strongConnectedness}}
If $f:S^n\longrightarrow \mathbb{R}^n$ is $\sn$-continuous (s.p.c.), then $f(x) = f(\righthalfcap x)$ for some $x\in X$.
\end{theorem}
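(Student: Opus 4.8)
The plan is to reduce the statement to the classical Borsuk--Ulam Theorem (Theorem~\ref{thm:Borsuk-Ulam1933one}), reading $\righthalfcap x$ as the antipode $-x$ of $x$ on $S^n$. Once it is known that a strongly proximal continuous $f\colon S^n\to\mathbb{R}^n$ is continuous in the ordinary topological sense, Theorem~\ref{thm:Borsuk-Ulam1933one} supplies a point $x\in S^n\subseteq\mathbb{R}^{n+1}$ with $f(x)=f(-x)$; setting $\righthalfcap x=-x$ then yields $f(x)=f(\righthalfcap x)$ for some $x$, which is the assertion.

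The substantive step is the passage from the strong-proximity hypothesis to ordinary continuity. Recall the characterization recorded above: for sets that are neither singletons nor equal to $X$, $A\ \sn\ B$ holds exactly when $\Int A\cap\Int B\neq\emptyset$, while $\{x\}\ \sn\ B$ holds exactly when $x\in\Int B$; axioms (snN4), (snN5) and the singleton axiom (snN6) tie $\sn$ to the underlying topology. Using these, together with Theorem~\ref{open}, which already certifies that $f$ is an open map, I would argue that $f$ respects interiors in both directions and is therefore continuous as a map of topological spaces $S^n\to\mathbb{R}^n$. Continuity then lets Theorem~\ref{thm:Borsuk-Ulam1933one} apply verbatim, producing $x\in S^n$ with $f(x)=f(-x)$, and interpreting $-x$ as the antipodal point $\righthalfcap x\in S^n\setminus\{x\}$ finishes the argument.

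The hard part is exactly this continuity step. The obstruction is that $\sn$ is anchored to \emph{interiors} rather than closures, so the standard ``proximally continuous $\Rightarrow$ topologically continuous'' argument, which runs through the closure/far-side description of a proximity, does not transcribe line-for-line; one must instead exploit the open-mapping conclusion of Theorem~\ref{open} and the compactness of $S^n$ to exclude open-but-discontinuous behaviour. Should $\righthalfcap x$ be read only as ``some point distinct from $x$'', the conclusion weakens to non-injectivity of $f$, but this too rests on the same continuity step, since it follows from the fact that the compact manifold $S^n$ admits no continuous injection into $\mathbb{R}^n$.
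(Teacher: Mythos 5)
Your proposal has a genuine gap at precisely the step you yourself flag as ``the hard part'': nothing in it establishes that an s.p.c.\ map $f\colon S^n\to\mathbb{R}^n$ is continuous in the ordinary topological sense. The strong proximity $\sn$ is anchored to interiors --- axioms (snN4), (snN5) and the characterization $A\ \sn\ B \Leftrightarrow \Int A\cap \Int B\neq\emptyset$ --- so preservation of $\sn$ translates into an openness-type condition on $f$; that is exactly why Theorem~\ref{open} concludes that $f$ is an \emph{open} map, and open maps need not be continuous. Your appeal to ``compactness of $S^n$ to exclude open-but-discontinuous behaviour'' is a hope, not an argument; without that step the reduction to Theorem~\ref{thm:Borsuk-Ulam1933one} never gets off the ground. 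Note also that you would be proving a stronger statement than the one asserted: the paper defines $\righthalfcap x$ to be \emph{any} point of $X\setminus\{x\}$, not the antipode $-x$, so the theorem as stated is only a non-injectivity-type claim and its conclusion does not require the antipodal symmetry of $S^n$ at all.

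The paper's own proof is entirely different and never leaves the proximity framework: it takes the singletons $A=\{x\}$, $B=\{\righthalfcap x\}$, applies the s.p.c.\ implication to them, and invokes axiom (snN6) --- singletons are strongly near if and only if they are equal --- to conclude $f(x)=f(\righthalfcap x)$. No classical Borsuk--Ulam theorem, no topological continuity, no compactness, and no geometry of $S^n$ enters; the paper says exactly this when it generalizes the result to arbitrary spaces $X$ in Theorem~\ref{thm:BorsukX}. (Whether that two-line argument is itself watertight is debatable: for $\righthalfcap x\neq x$, axiom (snN6) makes the hypothesis $\{x\}\ \sn\ \{\righthalfcap x\}$ false, so the s.p.c.\ implication is vacuous for that pair and the stated conclusion does not actually follow from it. But in any case your route is not the paper's route, and as written yours is incomplete at its central step.)
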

\begin{proof}
The mapping $f$ is s.p.c. if and only if $A\ {\sn}_{S^n}\ B$ implies $f(A)\ {\sn}_{\mathbb{R}^n}\ f(B)$.  Let $A = \left\{x\right\}, B = \left\{\righthalfcap x\right\}$ for some $x\in X, \righthalfcap x\in X\setminus \left\{x\right\}$.  From $\sn$-Axiom (snN6), $\left\{x\right\}\ \sn\ \left\{\righthalfcap x\right\}\Leftrightarrow \left\{x\right\} = \left\{\righthalfcap x\right\}$.  Hence,
$f(x) = f(\righthalfcap x)$.
\end{proof}

\setlength{\intextsep}{0pt}
\begin{wrapfigure}[11]{R}{0.35\textwidth}
\begin{minipage}{5.0 cm}
\centering
\includegraphics[width=40mm]{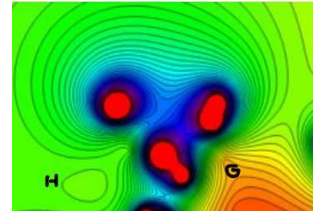}
\caption[]{Antipodal strings $H,G$}
\label{fig:worldlinesHG}
\end{minipage}
\end{wrapfigure}

\begin{corollary}
If $f:S^n\longrightarrow \mathbb{R}^n$ is $\sn$-continuous (s.p.c.), then $f(x) = f(\righthalfcap x)$ for some $x\in X$.
\end{corollary}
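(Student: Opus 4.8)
The plan is to read this corollary off directly from Theorem~\ref{thm:Borsuk}. The hypothesis that $f\colon S^n\to\mathbb{R}^n$ be $\sn$-continuous and the conclusion that $f(x)=f(\righthalfcap x)$ for some $x$ are word-for-word those of the preceding theorem, so no new content is strictly required: invoking Theorem~\ref{thm:Borsuk} closes the argument at once. The interest of restating it as a corollary, presumably, is to record the fact in the singleton form that will feed into the later string- and worldsheet-based variants.

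For a self-contained derivation I would replay the singleton specialization used in Theorem~\ref{thm:Borsuk}. First I would unfold the definition of strongly proximal continuity: for all $A,B\subset S^n$, the relation $A\ {\sn}_{S^n}\ B$ implies $f(A)\ {\sn}_{\mathbb{R}^n}\ f(B)$. Next I would specialize to the singletons $A=\left\{x\right\}$ and $B=\left\{\righthalfcap x\right\}$ with $\righthalfcap x\in S^n\setminus\left\{x\right\}$, so that the implication becomes a statement about the images $f(x)$ and $f(\righthalfcap x)$. Finally I would invoke the strong proximity axiom (snN6), namely $\left\{x\right\}\ \sn\ \left\{y\right\}\Leftrightarrow x=y$, to collapse the near-relation on the image side into the equality $f(x)=f(\righthalfcap x)$.

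The step I expect to be the genuine obstacle is producing the point $x$ at which the antipodal singletons stand in the strong proximity relation in the first place. The proximity axioms alone do not force a contact $\left\{x\right\}\ {\sn}_{S^n}\ \left\{\righthalfcap x\right\}$ when $\righthalfcap x\neq x$; that existence is the topological heart of the matter. I would supply it by falling back on the classical Borsuk-Ulam Theorem~\ref{thm:Borsuk-Ulam1933one}, which guarantees a point where $f$ identifies an antipodal pair, together with Theorem~\ref{open}, which ensures that an s.p.c.\ map is open and hence compatible with the ordinary continuity underlying Theorem~\ref{thm:Borsuk-Ulam1933one}. Thus the real work lies not in the proximity bookkeeping but in importing the antipodal coincidence from the classical result.
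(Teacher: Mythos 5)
Your first move is exactly what the paper does: the corollary is a verbatim restatement of Theorem~\ref{thm:Borsuk}, the paper attaches no separate proof to it, and so citing that theorem is the intended and complete argument. Where you genuinely depart from the paper is in your final paragraph, and your diagnosis there is accurate: the paper's own proof of Theorem~\ref{thm:Borsuk} merely specializes s.p.c.\ to the singletons $\{x\}$ and $\{\righthalfcap x\}$ and appeals to axiom (snN6); but since $\righthalfcap x\neq x$, axiom (snN6) says precisely that these singletons are \emph{not} strongly near, so the s.p.c.\ implication is vacuous for this pair and the asserted conclusion $f(x)=f(\righthalfcap x)$ does not follow from it. The paper never fills this hole, whereas you propose to fill it by importing the classical Borsuk--Ulam Theorem~\ref{thm:Borsuk-Ulam1933one}. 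That is a genuinely different grounding --- and the honest one --- but it is not free: Theorem~\ref{thm:Borsuk-Ulam1933one} requires ordinary topological continuity of $f$, and Theorem~\ref{open} supplies only that an s.p.c.\ map is \emph{open}, which neither is nor implies continuity, so a bridge from s.p.c.\ to continuity would still have to be built. In short, your route buys an actual existence proof for the coincidence point at the price of an extra hypothesis (or an unproved bridge), while the paper's route stays entirely inside the proximity axioms but establishes nothing beyond the trivial case $x=\righthalfcap x$.
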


Since the proof of Theorem~\ref{thm:Borsuk} depends on the domain and range of mapping $f$ being compatible topological spaces equipped with a s.p.c. map and does not depend on the geometry of $S^n$, we have

\begin{theorem}\label{thm:BorsukX}~{\rm \cite{PetersGuadagni2015strongConnectedness}}
Let $(X, \tau_X, {\sn}_X) $ and $(\mathbb{R}^n, \tau_{\mathbb{R}^n}, {\sn}_{\mathbb{R}^n})$ be topological spaces endowed with compatible strong proximities.
If $f:X\longrightarrow \mathbb{R}^n$ is $\sn$-continuous (s.p.c.), then $f(x) = f(\righthalfcap x)$ for some $x\in X$.
\end{theorem}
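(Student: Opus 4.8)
The plan is to reproduce the proof of Theorem~\ref{thm:Borsuk} almost verbatim, replacing $S^n$ by the arbitrary space $X$, since that argument appeals only to the strong proximity structure on the domain and codomain together with the s.p.c.\ property of $f$, and never to any sphere-specific feature. First I would unfold the definition of strongly proximal continuity: for all $A, B \subset X$, the relation $A\ {\sn}_X\ B$ implies $f(A)\ {\sn}_{\mathbb{R}^n}\ f(B)$. Because the two spaces carry \emph{compatible} strong proximities by hypothesis, this implication is precisely the bridge needed between the strong contact relation on $X$ and the strong contact relation on $\mathbb{R}^n$, and it is exactly here that compatibility (rather than the geometry of any particular domain) is doing the work.

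Next I would specialize to singletons. Fixing a point $x \in X$ together with a point $\righthalfcap x \in X \setminus \{x\}$ and setting $A = \{x\}$, $B = \{\righthalfcap x\}$, the s.p.c.\ property yields the implication $\{x\}\ {\sn}_X\ \{\righthalfcap x\} \Rightarrow \{f(x)\}\ {\sn}_{\mathbb{R}^n}\ \{f(\righthalfcap x)\}$. The decisive step is then to invoke Axiom \textbf{(snN6)}, $\{p\}\ \sn\ \{q\} \Leftrightarrow p = q$, applied to the image singletons in the codomain: $\{f(x)\}\ \sn\ \{f(\righthalfcap x)\} \Leftrightarrow f(x) = f(\righthalfcap x)$. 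Thus, once the antecedent contact relation $\{x\}\ {\sn}_X\ \{\righthalfcap x\}$ is secured for some $x$, the equality $f(x) = f(\righthalfcap x)$ drops out immediately, which is the desired conclusion.

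The main obstacle, and the only place where nonformal content enters, is securing that antecedent, namely exhibiting a point $x$ for which $\{x\}$ is in strong contact with its antipode in $(X, {\sn}_X)$; this is where the topological and proximal structure of $X$ must be used, whereas everything downstream of it is a purely formal transfer through \textbf{(snN6)}. I would therefore organise the write-up so as to state explicitly that the proof of Theorem~\ref{thm:Borsuk} is geometry-free, then observe that each line of that argument remains valid with $(X, \tau_X, {\sn}_X)$ in place of $(S^n, \tau_{S^n}, {\sn}_{S^n})$, the compatibility assumption guaranteeing that the s.p.c.\ implication and Axiom \textbf{(snN6)} apply verbatim. In short, the theorem follows by the identical reasoning, \emph{mutatis mutandis}.
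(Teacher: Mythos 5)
Your proposal is essentially the paper's own proof: the paper justifies Theorem~\ref{thm:BorsukX} precisely by remarking that the proof of Theorem~\ref{thm:Borsuk} depends only on the compatible strong proximities and the s.p.c.\ property of $f$, not on the geometry of $S^n$, which is exactly your singleton argument ($A=\{x\}$, $B=\{\righthalfcap x\}$, then Axiom (snN6) applied in the codomain) transplanted to $(X,\tau_X,{\sn}_X)$. The one caveat is that the ``main obstacle'' you honestly flag --- securing the antecedent $\{x\}\ {\sn}_X\ \{\righthalfcap x\}$ for some $x$ with $\righthalfcap x\neq x$, which Axiom (snN6) in the domain actually rules out --- is glossed over by the paper as well, so your reconstruction is faithful to the published argument, that unresolved step included.
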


\begin{corollary}
Let $X$ be a nonempty set of strings.  If $f:S^n\longrightarrow \mathbb{R}^n$ is $\sn$-continuous (s.p.c.), then $f(x) = f(\righthalfcap x)$ for some $x\in X$.
\end{corollary}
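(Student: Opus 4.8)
The plan is to obtain the corollary as an immediate specialization of Theorem~\ref{thm:Borsuk}, equivalently of its domain-agnostic form Theorem~\ref{thm:BorsukX}, by observing that the proof of Theorem~\ref{thm:Borsuk} appeals only to the strong proximity relation $\sn$ and to the s.p.c.\ hypothesis, and never to any feature that distinguishes ordinary points from the more general primitives of a point-free geometry. First I would record that, in the point-free setting adopted here, regions serve as the primitive objects in place of points; since every element of $X$ is a string and every string is a spatial region, the members of $X$ are admissible primitives to which axioms (snN0)--(snN6) apply verbatim.

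The key steps, carried out in order, are as follows. Given an antipodal pair of strings $x,\righthalfcap x\in X$, I would form the singletons $A=\left\{x\right\}$ and $B=\left\{\righthalfcap x\right\}$ and invoke the defining implication of an s.p.c.\ map, namely $A\ {\sn}_{S^n}\ B\Rightarrow f(A)\ {\sn}_{\mathbb{R}^n}\ f(B)$. I would then read axiom (snN6), $\left\{x\right\}\ \sn\ \left\{y\right\}\Leftrightarrow x=y$, with strings now playing the role of points, so as to pass from strong contact of the image singletons to the sought equality $f(x)=f(\righthalfcap x)$. Because compatibility of the strong proximities on $S^n$ and on $\mathbb{R}^n$ is already granted in the hypotheses inherited from Theorem~\ref{thm:BorsukX}, the argument transcribes that of Theorem~\ref{thm:Borsuk} line for line, with $x$ reinterpreted throughout as a string rather than a point.

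The main obstacle I anticipate is not computational but foundational: one must justify that the strong proximity structure $\sn$ on the string-covered $S^n$ still satisfies (snN6), and remains compatible with the strong proximity on $\mathbb{R}^n$, once its carriers are strings rather than ordinary points. I would dispose of this by noting that axioms (snN0)--(snN6) are formulated over the family $2^X$ for an arbitrary underlying set $X$, and that the point-free convention explicitly sanctions regions---hence strings---as the elements of $X$. Consequently (snN6) holds for string-singletons by the very definition of the structure, no property specific to ordinary points is invoked, and the specialization of Theorem~\ref{thm:Borsuk} to a string carrier is thereby legitimate.
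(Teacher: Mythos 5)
Your proposal matches the paper's own treatment: the paper states this corollary without proof as an immediate specialization of Theorem~\ref{thm:BorsukX}, whose proof (via Theorem~\ref{thm:Borsuk}) is exactly the singleton-plus-(snN6) argument you transcribe, with the nonempty set of strings $X$ serving as the underlying space of primitives. Your additional care in noting that axioms (snN0)--(snN6) are formulated over $2^X$ for an arbitrary carrier, so that strings may play the role of points under the paper's point-free convention, is precisely the implicit step the paper relies on, so the two arguments coincide.
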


\section{String-Based Borsuk Ulam Theorem (strBUT)}
This section considers string-based forms of the Borsuk Ulam Theorem (strBUT).  
Recall that a \emph{region} is limited to a set in a metric topological space.   By definition, a string on the surface of an $n$-sphere is a line that represents the path traced by moving particle along the surface of the $S^n$.  Disjoint strings on the surface of $S^n$ that are antipodal and with matching description are descriptively near \emph{antipodal strings}.    A pair of strings $A,\righthalfcap A$ are \emph{antipodal}, provided, for some $p\in A, q\in \righthalfcap A$, there exist disjoint parallel hyperplanes $P,Q\subset S^n$ such that $p\in P$ and $q\in Q$.   Such strings can be spatially far apart and also descriptively near.

\begin{example}\label{ex:antipodalStrings}
A pair of antipodal strings are represented by $H,G$ in Fig.~\ref{fig:worldlinesHG}.  Strings $H,G$ are antipodal, since they have no points in common.  Let bounded shape be a feature of a string.  A shape is bounded, provided the shape is surrounded (contained in) another shape. Then $H,G$ are descriptively near, since they are both bounded shapes.
\qquad \textcolor{blue}{$\blacksquare$}
\end{example}

We are interested in the case where strongly near strings are mapped to strongly near strings in a feature space.  To arrive at a string-based form of Theorem~\ref{thm:BorsukX}, we introduce region-based strong proximal continuity.  Let $X$ be a nonempty set and let $2^X$ denote the family of all subsets in $X$.  For example, $2^{S^n}$ is the family of all subsets on the surface of an $n$-sphere.
$\mbox{}$\\
\vspace{3mm}

\begin{definition}{\rm ~\cite[\S 5.7]{Peters2016ISRLcomputationalProximity}}.\\
Let $X,Y$ be nonempty sets.  Suppose that $(2^X, \tau_{2^X}, \dnear)$ and $(Y, \tau_{Y}, \dnear)$ are topological spaces endowed with strong proximities.  We say that $f: 2^X \rightarrow Y$ is \emph{region strongly proximal continuous} and we write \emph{\bf Re.d.p.c.} if and only if, for $A, B \in 2^X$, 
\[
\ A\ \dnear\ B \Rightarrow f(A)\ \dnear\ f(B). \mbox{\qquad \textcolor{blue}{$\blacksquare$}}
\] 
\end{definition} 

\begin{lemma}\label{lemma:redBorsuk}~{\rm \cite{PetersGuadagni2015strongConnectedness}}
Suppose that $(2^{\mathbb{R}^n}, \tau_{2^{\mathbb{R}^n}}, \dnear) $ and $(\mathbb{R}^n, \tau_{\mathbb{R}^n}, \dnear)$ are topological spaces endowed with compatible descriptive proximities and $f$ is a $\dnear$ Re.d.p.c. continuous mapping on the family of regions $2^{\mathbb{R}^n}$ into $\mathbb{R}^n$.
If $f(A)\in \mathbb{R}^n$ is a description common to antipodal regions $A,\righthalfcap A\in 2^{\mathbb{R}^n}\setminus A$, then $f(A) = f(\righthalfcap A)$ for some $\righthalfcap A$.
\end{lemma}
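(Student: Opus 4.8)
The plan is to follow the skeleton of the proof of Theorem~\ref{thm:Borsuk}, replacing the strong proximity $\sn$ by the descriptive proximity $\dnear$ and single points by regions in $2^{\mathbb{R}^n}$. There are three moves. First, I would translate the hypothesis that $f(A)$ is a description common to the antipodal regions $A$ and $\righthalfcap A$ into the descriptive-nearness relation $A\ \dnear\ \righthalfcap A$. Second, I would push this relation forward through $f$ by invoking Re.d.p.c. Third, I would collapse the resulting nearness of the image points to an equality of their descriptions using the descriptive separation axiom.

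For the first move, recall that $A\ \dnear\ B$ holds exactly when some $x\in A$ and $y\in B$ satisfy $\Phi(x)=\Phi(y)$. The assumption that $f(A)$ is a description common to $A$ and $\righthalfcap A$ means there are points $p\in A$ and $q\in\righthalfcap A$ with $\Phi(p)=\Phi(q)=f(A)$; hence $A\ \dcap\ \righthalfcap A\neq\emptyset$, and axiom (dP2) gives $A\ \dnear\ \righthalfcap A$. It is worth noting that antipodality of $A$ and $\righthalfcap A$ forbids shared points but not shared descriptions, so this relation is consistent with the setup (compare Example~\ref{ex:antipodalStrings}). For the second move, since $f$ is region strongly proximal continuous, $A\ \dnear\ \righthalfcap A$ immediately yields $f(A)\ \dnear\ f(\righthalfcap A)$ in $\mathbb{R}^n$.

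The third move, which I expect to be the delicate point, finishes the argument by reading $f(A)$ and $f(\righthalfcap A)$ as singleton regions of $\mathbb{R}^n$. From $\{f(A)\}\ \dnear\ \{f(\righthalfcap A)\}$ and the descriptive separation axiom (dP5), one obtains $\Phi(f(A))=\Phi(f(\righthalfcap A))$, i.e. the images carry matching descriptions, which is the sense in which $f(A)=f(\righthalfcap A)$. The main obstacle is precisely this last identification: the descriptive proximity detects only feature vectors, so the machinery delivers descriptive equality $\Phi(f(A))=\Phi(f(\righthalfcap A))$ rather than set-theoretic equality of the image points. To upgrade this to the literal equality $f(A)=f(\righthalfcap A)$ asserted in the statement, one must adopt the convention, already implicit in the point-based Theorem~\ref{thm:Borsuk}, that points of $\mathbb{R}^n$ are identified with their descriptions, so that $\Phi$ is injective on the singletons in play. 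I would make this convention explicit; without it the honest conclusion is that $f(A)$ and $f(\righthalfcap A)$ are descriptively near with equal descriptions.
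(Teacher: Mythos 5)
Your proposal cannot be checked against a proof in the paper, because the paper supplies none: Lemma~\ref{lemma:redBorsuk} is imported by citation from \cite{PetersGuadagni2015strongConnectedness}, and the text proceeds immediately to the $S^n$ variant. Judged on its own and against the paper's proofs of the analogous statements, your argument is correct within the paper's framework, and it is essentially the region-level, descriptive analogue of the paper's proof of Theorem~\ref{thm:Borsuk}: translate the hypothesis into a proximity relation, push it through the proximal-continuity definition, and collapse nearness of singletons to equality of descriptions via a separation-type axiom. Two refinements are worth recording. First, (dP5) is not free: the lemma hypothesizes only ``compatible descriptive proximities,'' and (dP5) is the defining condition of a \emph{descriptively separated} proximity, so invoking it adds an assumption; you can avoid it entirely by using Proposition~\ref{prop:dnear} instead, since $\{f(A)\}\ \dnear\ \{f(\righthalfcap A)\}$ yields $\{f(A)\}\ \dcap\ \{f(\righthalfcap A)\}\neq\emptyset$, and for singletons a nonempty descriptive intersection already forces $\Phi(f(A))=\Phi(f(\righthalfcap A))$. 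Second, your closing caveat --- that the proximity machinery delivers equality of descriptions, and the literal equality $f(A)=f(\righthalfcap A)$ requires identifying points of the feature space with their own descriptions --- is exactly the right thing to flag, and making that convention explicit improves on the source: in this paper $\mathbb{R}^n$ is a feature space, so $\Phi$ acts on it as the identity and the upgrade is harmless. For contrast, the nearest proof the paper does supply for a statement of this shape, Lemma~\ref{lemma:rexBorsuk}, argues only through a single concrete instance ($k=1$, corner regions of a rectangular space), so your general three-step argument is, if anything, more rigorous than the paper's own practice.
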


Lemma~\ref{lemma:redBorsuk} is restricted to regions in $2^{\mathbb{R}^n}$ described by feature vectors in an $n$-dimensional Euclidean space $\mathbb{R}^n$.  Next, consider regions on the surface of an $n$-sphere $S^n$.  Each feature vector $f(A)$ in $\mathbb{R}^n$ describes a region $A\in 2^{S^n}$.  Then we obtain the following result.

\begin{theorem}\label{thm:redSnBUT}~{\rm \cite{PetersGuadagni2015strongConnectedness}}
Suppose that $(2^{S^n}, \tau_{2^{S^n}}, \snd)$ and $(\mathbb{R}^n, \tau_{\mathbb{R}^n}, \snd)$ are topological spaces endowed with compatible strong proximities.  Let $A\in 2^{S^n}$, a region in the family of regions in $2^{S^n}$.
If $f:2^{S^n}\longrightarrow \mathbb{R}^n$ is $\dnear$ Re.d.p.c. continuous, then $f(A) = f(\righthalfcap A)$ for antipodal region $\righthalfcap A\in 2^{S^n}$.
\end{theorem}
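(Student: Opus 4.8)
The plan is to reduce the $n$-sphere case to the Euclidean case already settled in Lemma~\ref{lemma:redBorsuk}, using the description map $f$ as the bridge between the two proximity spaces. First I would record the structural hypothesis: every region $A\in 2^{S^n}$ carries a feature-vector description $f(A)\in\mathbb{R}^n$, so the image family $\{f(A):A\in 2^{S^n}\}$ sits inside $2^{\mathbb{R}^n}$, and the descriptive strong proximity on $2^{S^n}$ is compatible with (pulled back from) the one on $\mathbb{R}^n$ through $f$. This compatibility is exactly what lets a near-relation be transported across $f$, so that a region and its antipode on the sphere may be analyzed through their images in feature space.

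The key step is to show that a pair of antipodal regions $A,\righthalfcap A$ with matching descriptions are descriptively strongly near, that is, $A\ \snd\ \righthalfcap A$. Here the descriptive proximity performs the work that spatial nearness cannot: although $A$ and $\righthalfcap A$ are disjoint (being antipodal), matching descriptions force $\Phi(A)\cap\Phi(\righthalfcap A)\neq\emptyset$, hence the descriptive intersection satisfies $A\ \dcap\ \righthalfcap A\neq\emptyset$ by the definition ($\dcap$). Feeding this into axiom (dsnP4) applied to the interiors -- or, at the level of a single common-description point pair, invoking (dsnP6) and then lifting to the regions -- yields $A\ \snd\ \righthalfcap A$.

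Next I would apply the Re.d.p.c.\ hypothesis on $f$: from $A\ \snd\ \righthalfcap A$ we obtain $f(A)\ \snd\ f(\righthalfcap A)$ in $\mathbb{R}^n$. Since $f(A)$ and $f(\righthalfcap A)$ are single feature vectors, I would apply axiom (dsnP6) to the singletons $\{f(A)\},\{f(\righthalfcap A)\}$, giving $\{f(A)\}\ \snd\ \{f(\righthalfcap A)\}\Leftrightarrow\Phi(f(A))=\Phi(f(\righthalfcap A))$, whence $f(A)=f(\righthalfcap A)$. Lemma~\ref{lemma:redBorsuk}, applied to the descriptions $f(A),f(\righthalfcap A)$ viewed as regions in $2^{\mathbb{R}^n}$, then certifies that an antipodal region $\righthalfcap A\in 2^{S^n}$ carrying this common description genuinely exists, which is the assertion of the theorem.

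I expect the main obstacle to be the first key step: justifying $A\ \snd\ \righthalfcap A$ for \emph{disjoint} antipodal regions. The subtlety is that the non-descriptive axiom (snN2) would demand $A\cap\righthalfcap A\neq\emptyset$, which fails outright for antipodes; the entire argument therefore rests on passing to the descriptive intersection $\dcap$ and checking that ``matching descriptions'' is precisely the hypothesis that makes (dsnP2) and (dsnP4) applicable. I would also need to state carefully that the compatibility of the two descriptive strong proximities is what legitimizes transporting the near-relation across $f$, since without it the implication in the definition of Re.d.p.c.\ could not be chained with (dsnP6) in the range.
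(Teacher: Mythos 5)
Your overall strategy---reduce to Lemma~\ref{lemma:redBorsuk} and let the description map carry nearness across---matches the paper's intent, but note that the paper gives no proof of Theorem~\ref{thm:redSnBUT} at all: it is imported from the cited reference, with only the bridging remark that each feature vector $f(A)\in\mathbb{R}^n$ describes a region $A\in 2^{S^n}$, so that the $2^{\mathbb{R}^n}$ lemma transfers to $2^{S^n}$. Measured against what a complete argument requires, your proposal has genuine gaps. The first is an axiom mismatch: the hypothesis is that $f$ is $\dnear$ Re.d.p.c., and by the paper's definition this transports only the descriptive proximity, $A\ \dnear\ B \Rightarrow f(A)\ \dnear\ f(B)$. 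Your central step ``from $A\ \snd\ \righthalfcap A$ we obtain $f(A)\ \snd\ f(\righthalfcap A)$'' uses $f$ to transport the \emph{strong} descriptive proximity $\snd$, which is a different, stronger continuity notion (what the paper later calls Re.d.s.p.c.); nothing in the hypothesis licenses it. The repair is to stay in $\dnear$ throughout: matching descriptions give $A\ \dcap\ \righthalfcap A\neq\emptyset$, axiom (dP2) gives $A\ \dnear\ \righthalfcap A$, Re.d.p.c.\ gives $f(A)\ \dnear\ f(\righthalfcap A)$, and then descriptive separatedness (dP5)---an assumption you must add, since it is optional in the paper's axiom list---gives $\Phi(f(A))=\Phi(f(\righthalfcap A))$, which yields $f(A)=f(\righthalfcap A)$ only under the further convention that $\Phi$ acts as the identity on feature vectors. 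Relatedly, your proposed route through (dsnP4) is unsound in general: that axiom needs $\mbox{int}\,A\ \dcap\ \mbox{int}(\righthalfcap A)\neq\emptyset$, whereas matching descriptions give only $A\ \dcap\ \righthalfcap A\neq\emptyset$; for regions with empty interior---strings have zero width, and strings are exactly what Theorems~\ref{cor:strBUTsimplest} and~\ref{cor:strBUTsheet} later feed into this result---(dsnP4) can never fire.

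The second gap is your closing claim that Lemma~\ref{lemma:redBorsuk}, applied to $f(A),f(\righthalfcap A)$, ``certifies that an antipodal region $\righthalfcap A\in 2^{S^n}$ carrying this common description genuinely exists.'' This inverts the lemma's logic: the lemma takes ``$f(A)$ is a description common to antipodal regions $A,\righthalfcap A$'' as a \emph{hypothesis} and concludes $f(A)=f(\righthalfcap A)$; it asserts no existence whatsoever. So the existence of an antipodal region with matching description remains an undischarged assumption in your argument, exactly where you located the difficulty yourself. To be fair, the paper shares this defect---Theorem~\ref{thm:redSnBUT} silently drops the common-description hypothesis that Lemma~\ref{lemma:redBorsuk} carries---but a correct write-up must either restore that hypothesis explicitly or actually produce $\righthalfcap A$, and yours does neither.
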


\begin{theorem}\label{cor:strBUTsimplest}
Suppose that $(2^{S^n}, \tau_{2^{S^n}}, \snd)$ and $(\mathbb{R}^n, \tau_{\mathbb{R}^n}, \snd)$ are topological spaces endowed with compatible strong proximities.  Let $A\in 2^{S^n}$, a string in the family of strings in $2^{S^n}$.
If $f:2^{S^n}\longrightarrow \mathbb{R}^n$ is $\dnear$ Re.d.p.c. continuous, then $f(A) = f(\righthalfcap A)$ for antipodal string $\righthalfcap A\in 2^{S^n}$.
\end{theorem}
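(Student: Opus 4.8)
The plan is to obtain this statement as a direct specialization of Theorem~\ref{thm:redSnBUT}. The decisive observation, already recorded in the Introduction, is that every string is a spatial region while not every region is a string; hence the family of strings on $S^n$ is a subfamily of the family of all regions $2^{S^n}$. Consequently the hypothesis that $A$ is a string is simply the hypothesis that $A$ is a region restricted to this subclass, and the antipodal partner $\righthalfcap A$ supplied for a string is again a string, and therefore again a region.

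First I would fix the string $A\in 2^{S^n}$ together with its antipodal string $\righthalfcap A$. By the definition of antipodal strings given at the opening of Section~3, there exist points $p\in A$ and $q\in \righthalfcap A$ lying in disjoint parallel hyperplanes $P,Q\subset S^n$; this is exactly the antipodality condition demanded of the regions appearing in Theorem~\ref{thm:redSnBUT}. Thus $A$ and $\righthalfcap A$ qualify as antipodal \emph{regions} in $2^{S^n}$, and no further geometric input is required.

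Next I would invoke Theorem~\ref{thm:redSnBUT} verbatim. Since $(2^{S^n},\tau_{2^{S^n}},\snd)$ and $(\mathbb{R}^n,\tau_{\mathbb{R}^n},\snd)$ carry compatible strong proximities and $f\colon 2^{S^n}\to\mathbb{R}^n$ is $\dnear$ Re.d.p.c. continuous, that theorem applied to the region $A$ produces an antipodal region $\righthalfcap A$ with $f(A)=f(\righthalfcap A)$. Because $A$ was chosen to be a string and the antipodal partner produced in the argument is itself a string, the equality $f(A)=f(\righthalfcap A)$ holds for the antipodal \emph{string} $\righthalfcap A$, which is precisely the assertion.

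The only point demanding any attention---and it is an obstacle in name only---is to confirm that restricting to the subclass of strings does not disturb the structure used in Theorem~\ref{thm:redSnBUT}. The relations $\snd$ and $\dnear$ are defined on all of $2^{S^n}$, so their restriction to string arguments is automatic; likewise the Re.d.p.c. condition on $f$ is a statement quantified over all members of $2^{S^n}$ and therefore holds a fortiori when the arguments happen to be strings. Hence no fresh verification of axioms (dsnP0)--(dsnP6) is needed, and the result follows immediately from the region-based theorem.
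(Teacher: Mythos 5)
Your proposal is correct and follows essentially the same route as the paper: the paper's own proof also treats each string as a spatial subregion of $2^{S^n}$ and obtains the result by substituting the antipodal strings for the antipodal regions in Theorem~\ref{thm:redSnBUT}. Your version simply makes explicit the details the paper leaves implicit (that string antipodality instantiates region antipodality, and that the proximity structure and the Re.d.p.c. hypothesis restrict automatically to the subfamily of strings).
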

\begin{proof}
Let each string be a spatial subregion of $2^{S^n}$.  Let $\str A, str(\righthalfcap A)\in 2^{S^n}$.  Swap out $A,\righthalfcap A\in 2^{\mathbb{R}^n}$ with $\str A, str(\righthalfcap A)\in 2^{S^n}$ in Theorem~\ref{thm:redSnBUT} and the result follows.
\end{proof} 

\begin{theorem}\label{cor:strBUTsheet}
Suppose that $(2^{S^n}, \tau_{2^{S^n}}, \snd)$ and $(\mathbb{R}^n, \tau_{\mathbb{R}^n}, \snd)$ are topological spaces endowed with compatible strong proximities.  Let $\sh M\in 2^{S^n}$ be a worldsheet in the family of worldsheets in $2^{S^n}$.
If $f:2^{S^n}\longrightarrow \mathbb{R}^n$ is $\dnear$ Re.d.p.c. continuous, then $f(\sh M) = f(\righthalfcap \sh M)$ for antipodal $\righthalfcap \sh M\in 2^{S^n}$.
\end{theorem}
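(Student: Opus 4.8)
The plan is to recognize that a worldsheet is merely a distinguished kind of region and then to reduce the assertion to the already-established region-based result, Theorem~\ref{thm:redSnBUT}. First I would observe that, by definition, $\sh M$ is a nonempty region of $S^n$ every subregion of which contains at least one string; in particular $\sh M$ is covered by strings and is itself a member of the family of regions $2^{S^n}$, and the same holds for its antipode $\righthalfcap \sh M$. This places both worldsheets squarely inside the domain of Theorem~\ref{thm:redSnBUT}, whose standing hypotheses --- the compatible strong proximities $\snd$ on $2^{S^n}$ and on $\mathbb{R}^n$, together with the $\dnear$ Re.d.p.c. map $f$ --- are word-for-word the ones assumed in the present statement. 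So nothing new need be assumed about $f$ or about the two spaces.

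The substantive step is to check that antipodality of the worldsheets delivers exactly the antipodality of regions demanded by Theorem~\ref{thm:redSnBUT}. By the definition of antipodal worldsheets there exist strings $\str A\in \sh M$ and $\str B\in \righthalfcap \sh M$ with $\str A\cap \str B=\emptyset$; selecting $p\in \str A$ and $q\in \str B$ and invoking the (relaxed) convention that a disjoint pair of antipodal strings lies in disjoint parallel hyperplanes, I obtain points $p\in \sh M$ and $q\in \righthalfcap \sh M$ belonging to disjoint hyperplanes. Thus $\sh M$ and $\righthalfcap \sh M$ qualify as antipodal regions in the sense used by the region-based theorem, and the constituent disjoint string pair is precisely what certifies this.

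With both ingredients in hand I would substitute $\sh M$ for the region $A$ and $\righthalfcap \sh M$ for $\righthalfcap A$ throughout Theorem~\ref{thm:redSnBUT}, exactly as Theorem~\ref{cor:strBUTsimplest} specialised that result to individual strings. Since $f$ is $\dnear$ Re.d.p.c., it carries $\dnear$-related antipodal regions to $\dnear$-related images, so the conclusion of Theorem~\ref{thm:redSnBUT} applies verbatim and yields $f(\sh M)=f(\righthalfcap \sh M)$. I note in passing that an alternative route would apply Theorem~\ref{cor:strBUTsimplest} to the disjoint string pair $\str A,\str B$ to get $f(\str A)=f(\str B)$ and then propagate this across the string cover; but treating the sheet as a single region is cleaner and avoids any aggregation over the cover.

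The main obstacle I anticipate is the reconciliation of the two notions of antipodality: worldsheet antipodality is phrased through the existence of a single disjoint string pair, whereas Theorem~\ref{thm:redSnBUT} is phrased through points lying in disjoint hyperplanes. Ensuring that the existence of one disjoint string pair inside the sheets genuinely promotes the sheets to antipodal regions --- and that the relaxed hyperplane requirement adopted earlier in the paper, rather than the strict Petty parallel-hyperplane condition, is the one in force --- is where the real content sits; the remainder is a direct appeal to Theorem~\ref{thm:redSnBUT}.
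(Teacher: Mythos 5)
Your proposal is correct and follows essentially the same route as the paper: the paper's proof simply declares the argument ``symmetric'' with that of Theorem~\ref{cor:strBUTsimplest}, i.e., it treats the worldsheet $\sh M$ as a spatial subregion of $2^{S^n}$ and swaps it into Theorem~\ref{thm:redSnBUT}, exactly as you do. Your additional care in reconciling worldsheet antipodality (a disjoint string pair) with region antipodality is a more explicit treatment of a step the paper leaves implicit, but it is the same reduction.
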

\begin{proof}
The proof is symmetric with the proof of Theorem~\ref{cor:strBUTsimplest}.
\end{proof} 

\begin{remark}
Theorem~\ref{cor:strBUTsimplest} and Theorem~\ref{cor:strBUTsheet} are the simplest forms of string-based Borsuk-Ulam Theorem (strBUT).  In this section, we also consider other forms of strBUT that arise naturally from strong forms of descriptive proximity and which have proved to be useful in a number of applications.
\qquad \textcolor{blue}{$\blacksquare$}
\end{remark}

\begin{definition}{\bf Region-Based $\sn$-Continuous Mapping}~\cite[\S 5.7]{Peters2016ISRLcomputationalProximity}.\\
Let $X,Y$ be nonempty sets.  Suppose that $(2^X, \tau_{2^X}, {\sn}_{2^X}) $ and $(Y, \tau_{Y}, {\sn}_{Y})$ are topological spaces endowed with strong proximities.  We say that $f: 2^X \rightarrow Y$ is \emph{region strongly proximal continuous} and we write \emph{\bf Re.s.p.c.} if and only if, for $A, B \in 2^X$, 
\[
\ A\ {\sn}_X\ B \Rightarrow f(A)\ {\sn}_Y\ f(B). \mbox{\qquad \textcolor{blue}{$\blacksquare$}}
\] 
\end{definition} 

For an introduction to s.p.c. mappings, see~\cite{PetersGuadagni2016spcMappings}.  Let $A\in 2^{S^n},\righthalfcap A\in 2^{S^n}\setminus A$.  For a Re.s.p.c. mapping $f: 2^{S^n} \rightarrow \mathbb{R}^n$ on the collection of subsets $2^{S^n}$ into $\mathbb{R}^n$, the assumption is that $2^{S^n}$ is a region-based object space (each object is represented by a nonempty region) and $\mathbb{R}^n$ is a feature space (each region $A$ in $2^{S^n}$ maps to a feature vector $y$ in an $n$-dimensional Euclidean space $\mathbb{R}^n$ such that $y\in \mathbb{R}^n$ is a description of region $A$ that matches the description of $\righthalfcap A$). 

\begin{definition}{\bf Region-Based $\snd$-Continuous Mapping}.\\
The mapping $f: 2^X \rightarrow \mathbb{R}^n$ is \emph{region $\snd$-continuous} and we write \emph{\bf Re.d.s.p.c.} if and only if, for $A, B \in 2^X$, 
\[
\ A\ \snd\ B \Rightarrow f(A)\ \snd\ f(B),
\]
where $f(A)\in \mathbb{R}^n$ is a feature vector that describes region $A$.
\mbox{\qquad \textcolor{blue}{$\blacksquare$}}
\end{definition}

\begin{lemma}\label{lemma:reBorsuk} {\rm \cite{PetersTozzi2016reBUT}}.\\
Suppose that $(2^{\mathbb{R}^n}, \tau_{2^{\mathbb{R}^n}}, \snd) $ and $(\mathbb{R}^n, \tau_{\mathbb{R}^n}, \snd)$ are topological spaces endowed with compatible strong descriptive proximities and $f$ is a $\snd$ continuous mapping on the family of regions $2^{\mathbb{R}^n}$ into $\mathbb{R}^n$.
If $f(A)\in \mathbb{R}^n$ is a description common to antipodal regions $A,\righthalfcap A\in 2^{\mathbb{R}^n}\setminus A$, then $f(A) = f(\righthalfcap A)$ for some $\righthalfcap A$.
\end{lemma}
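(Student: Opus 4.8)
The plan is to mirror the argument used for the point-based Borsuk-Ulam result (Theorem~\ref{thm:Borsuk}) and its descriptive-proximity counterpart (Lemma~\ref{lemma:redBorsuk}), now carried out with the strong descriptive proximity $\snd$ in place of $\dnear$. First I would unwind the definition of a region $\snd$-continuous (Re.d.s.p.c.) mapping, which asserts that $A\ \snd\ B \Rightarrow f(A)\ \snd\ f(B)$ for all $A,B\in 2^{\mathbb{R}^n}$. The goal is then to feed the antipodal pair $A,\righthalfcap A$ into this implication and read off the conclusion at the level of feature vectors.

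Next I would establish that the antipodes $A$ and $\righthalfcap A$ are strongly descriptively near. By hypothesis the feature vector $f(A)$ is a description common to both $A$ and $\righthalfcap A$; hence there are points $x\in A, y\in \righthalfcap A$ with $\Phi(x)=\Phi(y)=f(A)$, so that the descriptive intersection $A\ \dcap\ \righthalfcap A$ is nonempty by definition of $\dcap$. Invoking the compatibility of the two strong descriptive proximities together with axiom (dsnP4), $\Int A\ \dcap\ \Int \righthalfcap A \neq \emptyset \Rightarrow A\ \snd\ \righthalfcap A$, I would promote this shared description to strong descriptive nearness, obtaining $A\ \snd\ \righthalfcap A$.

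Applying the Re.d.s.p.c. property to this relation yields $f(A)\ \snd\ f(\righthalfcap A)$. Since $f(A)$ and $f(\righthalfcap A)$ are feature vectors, hence points of $\mathbb{R}^n$, I would finish with axiom (dsnP6), namely $\{f(A)\}\ \snd\ \{f(\righthalfcap A)\} \Leftrightarrow \Phi(f(A))=\Phi(f(\righthalfcap A))$. In the feature space $\mathbb{R}^n$ the description of a feature vector is taken to be the vector itself, so matching descriptions coincide with equality, and the desired conclusion $f(A)=f(\righthalfcap A)$ follows for the antipode $\righthalfcap A$ furnished by the hypothesis.

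The delicate step, and the one I expect to be the main obstacle, is the promotion of mere descriptive nearness (a single shared feature vector) to strong descriptive nearness through the interior condition of (dsnP4): one must ensure that the common description is witnessed in the interiors $\Int A$ and $\Int \righthalfcap A$ rather than only on the boundaries. This is precisely where the compatibility assumption on the two strong descriptive proximities does the real work, and it is the place where the strong-proximity hypothesis genuinely strengthens the conclusion beyond the plain descriptive version in Lemma~\ref{lemma:redBorsuk}.
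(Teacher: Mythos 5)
The paper itself gives no proof of Lemma~\ref{lemma:reBorsuk}: it is quoted from \cite{PetersTozzi2016reBUT}, and the intended argument (visible in the proof the paper does supply for the analogous Lemma~\ref{lemma:rexBorsuk}) is essentially definitional --- $f$ assigns to each region its feature-vector description, so the hypothesis that $f(A)$ is a description common to $A$ and $\righthalfcap A$ already says that the description of $\righthalfcap A$, namely the unique value $f(\righthalfcap A)$, coincides with $f(A)$. Your proposal instead tries to route the antipodal pair through the $\snd$-continuity of $f$, and that is where it breaks.

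The genuine gap is exactly the step you flagged and then waved through: you cannot promote a single shared feature vector to $A\ \snd\ \righthalfcap A$. The shared description gives $A\ \dcap\ \righthalfcap A\neq\emptyset$, hence only descriptive nearness $A\ \dnear\ \righthalfcap A$ via (dP2); axiom (dsnP4) requires $\Int A\ \dcap\ \Int (\righthalfcap A)\neq\emptyset$, and nothing in the hypotheses places the descriptively matching points in the interiors --- note that (dsnP2) runs in the opposite direction, from $\snd$ to $\dcap\neq\emptyset$. Compatibility of the proximities with the topologies cannot rescue this: it ties $\snd$ to the topology of each space, it does not convert a nonempty descriptive intersection into an interior condition. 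Worse, in the setting this lemma is built to serve, the regions of interest are strings, which have zero width and hence empty interior in $\mathbb{R}^n$, so (dsnP4) can never fire and your step 2 fails outright. The repair is to drop the continuity detour altogether: since $f(\righthalfcap A)\in\mathbb{R}^n$ is by definition the description of $\righthalfcap A$, and $f(A)$ is assumed to describe $\righthalfcap A$ as well, single-valuedness of $f$ gives $f(A)=f(\righthalfcap A)$ directly, which is all the cited source claims. Your closing step via (dsnP6), identifying matching descriptions of feature vectors with equality of the vectors themselves, is acceptable as a convention, but it sits downstream of the step that cannot be fixed.
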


\setlength{\intextsep}{0pt}
\begin{wrapfigure}[12]{R}{0.35\textwidth}
\begin{minipage}{3.2 cm}
\centering
\includegraphics[width=27mm]{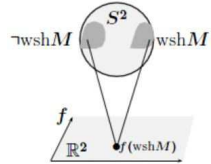}
\caption[]{\footnotesize $\boldsymbol{2^{S^2}\mapsto \mathbb{R}^2}$}
\label{fig:re2BUT}
\end{minipage}
\end{wrapfigure}
$\mbox{}$\\
\vspace{3mm}

Lemma~\ref{lemma:reBorsuk} is restricted to regions in $(2^{\mathbb{R}^n}$ described by feature vectors in an $n$-dimensional Euclidean space $\mathbb{R}^n$.  Next, consider regions on the surface of an $n$-sphere $S^n$.  Each feature vector $f(A)$ in $\mathbb{R}^n$ describes a region $A\in 2^{S^n}$.  Then we obtain the following result.

\begin{theorem}\label{thm:reSnBUT} {\rm \cite{PetersTozzi2016reBUT}}.\\
Suppose that $(2^{S^n}, \tau_{2^{S^n}}, \snd)$ and $(\mathbb{R}^n, \tau_{\mathbb{R}^n}, \snd)$ are topological spaces endowed with compatible strong proximities.  Let $A\in 2^{S^n}$, a region in the family of regions in $2^{S^n}$.
If $f:2^{S^n}\longrightarrow \mathbb{R}^n$ is $\sn$-Re.s.p continuous, then $f(A) = f(\righthalfcap A)$ for region $\righthalfcap A\in 2^{S^n}$.
\end{theorem}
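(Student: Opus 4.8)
The plan is to reduce the $n$-sphere case to the already-established Euclidean case of Lemma~\ref{lemma:reBorsuk}, exploiting the correspondence between regions on $S^n$ and their feature-vector descriptions in $\mathbb{R}^n$. The passage quoted just before the statement supplies the bridge: every feature vector $f(A)\in\mathbb{R}^n$ describes a region $A\in 2^{S^n}$, so the roles played by $A,\righthalfcap A\in 2^{\mathbb{R}^n}$ in Lemma~\ref{lemma:reBorsuk} can be taken over by their $S^n$-counterparts without disturbing the proximity structure, since the hypothesis endows both $(2^{S^n},\snd)$ and $(\mathbb{R}^n,\snd)$ with compatible strong descriptive proximities. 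This mirrors the one-line reductions already used for Theorem~\ref{cor:strBUTsimplest} and Theorem~\ref{cor:strBUTsheet}.

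Concretely, I would proceed as follows. First, fix the region $A\in 2^{S^n}$ together with an antipodal region $\righthalfcap A\in 2^{S^n}\setminus A$; by the definition of antipodal regions adopted in \S 3, $A$ and $\righthalfcap A$ contain disjoint substrings lying in disjoint parallel hyperplanes, hence are spatially disjoint. Second, I would invoke the standing assumption on $f$: the feature vector $f(A)\in\mathbb{R}^n$ is a description common to $A$ and $\righthalfcap A$, exactly as in the hypothesis of Lemma~\ref{lemma:reBorsuk}. This common description gives $A\ \dcap\ \righthalfcap A\neq\emptyset$, whence $A\ \snd\ \righthalfcap A$ (by axiom (dsnP4) on the descriptive interiors, or by transporting the conclusion of Lemma~\ref{lemma:reBorsuk} directly). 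Third, region $\snd$-continuity (Re.d.s.p.c.) of $f$ yields $f(A)\ \snd\ f(\righthalfcap A)$ in $\mathbb{R}^n$. Finally, $f(A)$ and $f(\righthalfcap A)$ are single feature vectors, so applying the singleton axiom (dsnP6), $\{f(A)\}\ \snd\ \{f(\righthalfcap A)\}\Leftrightarrow \Phi(f(A))=\Phi(f(\righthalfcap A))$, and under the identity description on the feature space this forces $f(A)=f(\righthalfcap A)$.

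The main obstacle is the second step: establishing $A\ \snd\ \righthalfcap A$ while $A$ and $\righthalfcap A$ are spatially disjoint antipodes. Strong nearness in the \emph{spatial} proximity $\sn$ would demand a nonempty intersection by axiom (snN2), which antipodality forbids; what rescues the argument is that the relation actually in play is the \emph{descriptive} strong proximity $\snd$, for which nearness requires only a shared description (axiom (dsnP2) delivers $A\ \dcap\ \righthalfcap A\neq\emptyset$, not $A\cap\righthalfcap A\neq\emptyset$). Thus the whole weight of the theorem rests on the matching-description hypothesis on $f$ and on keeping $\snd$ sharply distinct from $\sn$; the notational clash in the statement (``$\sn$-Re.s.p continuous'' versus spaces endowed with $\snd$) should be read as the descriptive version, and I would flag that at the outset. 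Once the descriptive nearness $A\ \snd\ \righthalfcap A$ is secured, the remaining manipulations are the same formal steps already carried out in the proof of Theorem~\ref{thm:Borsuk}.
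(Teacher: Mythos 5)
Your proposal takes essentially the same route as the paper: the paper supplies no separate argument for this theorem (it is imported from \cite{PetersTozzi2016reBUT}), and its implicit justification is exactly the reduction you lead with --- each feature vector $f(A)\in\mathbb{R}^n$ describes a region $A\in 2^{S^n}$, so Lemma~\ref{lemma:reBorsuk} carries over with $2^{\mathbb{R}^n}$ swapped for $2^{S^n}$, mirroring the swap-out proofs of Theorem~\ref{cor:strBUTsimplest} and Theorem~\ref{cor:strBUTsheet}. Your axiom-level elaboration (using (dsnP4), (dsnP6), and the distinction between $\snd$ and $\sn$ for spatially disjoint antipodes) goes beyond what the paper records but is consistent with it.
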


\begin{theorem}\label{thm:reSnBUTworldsheet}
Suppose that $(2^{S^n}, \tau_{2^{S^n}}, \snd)$ and $(\mathbb{R}^n, \tau_{\mathbb{R}^n}, \snd)$ are topological spaces endowed with compatible strong proximities.  Let $\sh A\in 2^{S^n}$ be a worldsheet in the family of regions in $2^{S^n}$.
If $f:2^{S^n}\longrightarrow \mathbb{R}^n$ is $\sn$-Re.s.p continuous, then $f(\sh A) = f(\righthalfcap \sh A)$ for region $\righthalfcap \sh A\in 2^{S^n}$.
\end{theorem}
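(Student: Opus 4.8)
The plan is to obtain this worldsheet statement as a direct specialization of the region-based result in Theorem~\ref{thm:reSnBUT}, exactly as Theorem~\ref{cor:strBUTsheet} was derived from Theorem~\ref{thm:redSnBUT}. The guiding observation is that a worldsheet $\sh A$ is by definition a nonempty region of $S^n$ (one every subregion of which contains a string), so $\sh A$ already belongs to the family $2^{S^n}$ on which Theorem~\ref{thm:reSnBUT} operates. Worldsheets are therefore not a new species of object but a distinguished subclass of regions, and the hypotheses of Theorem~\ref{thm:reSnBUT} are available verbatim.

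First I would record that the two spaces $(2^{S^n}, \tau_{2^{S^n}}, \snd)$ and $(\mathbb{R}^n, \tau_{\mathbb{R}^n}, \snd)$ carry compatible strong proximities and that $f$ is $\sn$-Re.s.p continuous, which are precisely the standing assumptions of Theorem~\ref{thm:reSnBUT}. Next I would verify that the worldsheet notion of antipodality coincides with the region notion used there: by the definition in \S2, $\sh A$ and $\righthalfcap \sh A$ are antipodal when they contain strings $\str A\in \sh A$, $\str B\in \righthalfcap \sh A$ with $\str A\cap \str B = \emptyset$, which is the same disjoint-substring condition that makes two regions antipodal. Hence $\righthalfcap \sh A$ qualifies as a legitimate antipodal region in the sense of Theorem~\ref{thm:reSnBUT}.

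With these identifications in hand the conclusion is immediate: substitute $A\assign \sh A$ and $\righthalfcap A\assign \righthalfcap \sh A$ into Theorem~\ref{thm:reSnBUT}. Since $\sh A\in 2^{S^n}$ and $\righthalfcap \sh A\in 2^{S^n}$ is its antipode, the theorem yields $f(\sh A) = f(\righthalfcap \sh A)$, which is exactly the desired equality, and the argument closes symmetrically to that of Theorem~\ref{cor:strBUTsheet}.

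I expect the only nontrivial point to be confirming that restricting attention to worldsheets does not disturb the strong-proximity structure invoked by Theorem~\ref{thm:reSnBUT}: one must be sure that a strongly near pair of worldsheets is still a strongly near pair of regions (so that Re.s.p continuity applies) and that the feature map $f$ assigns to $\sh A$ and $\righthalfcap \sh A$ a common description. Both follow once one accepts that $\sn$ on $2^{S^n}$ is defined set-theoretically and is indifferent to whether a region happens to be string-covered; the substitution then delivers the result without further computation.
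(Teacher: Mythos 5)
Your proposal is correct and matches the paper's own argument, which is exactly the one-line observation that a worldsheet $\sh A$ is itself a region in $2^{S^n}$, so Theorem~\ref{thm:reSnBUT} applies directly with $A \assign \sh A$. The extra verifications you flag (antipodality and the indifference of $\snd$ to string-coverings) are reasonable diligence but are treated as immediate in the paper, which simply cites Theorem~\ref{thm:reSnBUT}.
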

\begin{proof}
Since a worldsheet $\sh A\in 2^{S^n}$ is a region in $2^{S^n}$, the result follows from Theorem~\ref{thm:reSnBUT}.
\end{proof}

\begin{example}
The $\sn$-Re.s.p continuous mapping $f:2^{S^2}\longrightarrow \mathbb{R}^2$ is represented in Fig.~\ref{fig:re2BUT}.  In this example, it is assumed that the worldsheets $\sh M,\righthalfcap \sh M$ have matching descriptions one feature, {\em e.g.}, area.  In that case, $f(\sh M) = f(\righthalfcap \sh M)$.
\qquad \textcolor{blue}{$\blacksquare$} 
\end{example}

In the proof of Theorem~\ref{thm:reSnBUT} and Theorem~\ref{thm:reSnBUTworldsheet}, we do not depend on the fact that each region is on the surface of a hypersphere $S^n$.  For this reason, we are at liberty to introduce a more general region-based Borsuk-Ulam Theorem (denoted by reBUT), applicable to strings and worldsheets.

Let $A\in 2^X,\righthalfcap A\in 2^X\setminus A$.  For a Re.s.p.c. mapping $f: 2^X \rightarrow \mathbb{R}^n$ on the collection of subsets $2^X$ to $\mathbb{R}^n$, the assumption is that $2^X$ is a region-based object space (each object is represented by a nonempty region) and $\mathbb{R}^n$ is a feature space (each region $A$ in $2^X$ maps to a feature vector $y$ in $\mathbb{R}^n$ such that $y$ is a description of region $A$).  Then we obtain the following result

\begin{theorem}\label{thm:reRnBUT} {\rm \cite{PetersTozzi2016reBUT}}.\\
Suppose that $(X, \tau_X, {\sn}_X) $ and $(\mathbb{R}^n, \tau_{\mathbb{R}^n}, {\sn}_{\mathbb{R}^n})$ are topological spaces endowed with compatible strong proximities.
If $f:2^X\longrightarrow \mathbb{R}^n$ is $\sn$-Re.s.p continuous, then $f(A) = f(\righthalfcap A)$ for some $A\in 2^X$.
\end{theorem}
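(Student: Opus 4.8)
The plan is to read Theorem~\ref{thm:reRnBUT} as the degeometrized version of Theorem~\ref{thm:reSnBUT}: every occurrence of the specific carrier $S^n$ is to be replaced by an arbitrary topological space $X$, and then the argument already recorded for the $S^n$ case is to be rerun verbatim. So the first thing I would do is verify that the substitution $S^n \rightsquigarrow X$ leaves every hypothesis meaningful. The region lattice $2^X$, the strong proximity ${\sn}_X$ on it, the compatible strong proximity ${\sn}_{\mathbb{R}^n}$, and the Re.s.p.c. hypothesis on $f$ are all defined purely in terms of the axioms (snN0)--(snN6) for an ambient topological space; none of them invokes any metric, smooth, or homological feature of the $n$-sphere. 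This is exactly the observation flagged in the paragraph preceding the statement, and it is also the observation that upgraded Theorem~\ref{thm:Borsuk} to Theorem~\ref{thm:BorsukX} at the point level. Hence the substitution keeps all data consistent, and it remains only to reproduce the coincidence argument.

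Next I would carry out the transport step at the region level, mirroring the proof of Theorem~\ref{thm:Borsuk}. Suppose $A,\righthalfcap A\in 2^X$ is an antipodal pair that is strongly near, $A\ {\sn}_X\ \righthalfcap A$. Because $f$ is $\sn$-Re.s.p.\ continuous, the defining implication gives $f(A)\ {\sn}_{\mathbb{R}^n}\ f(\righthalfcap A)$. Since $f$ sends each region to a single feature vector, the images $f(A)$ and $f(\righthalfcap A)$ are singletons in $\mathbb{R}^n$, and axiom (snN6), namely $\{u\}\ \sn\ \{v\}\Leftrightarrow u=v$, collapses strong nearness of singletons to equality. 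Therefore $f(A)=f(\righthalfcap A)$, which is the asserted conclusion. This is the same mechanism that yields Lemma~\ref{lemma:reBorsuk} and Theorem~\ref{thm:reSnBUT}, now read off with $S^n$ replaced by $X$.

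The step I expect to be the main obstacle is the existence clause: the conclusion asserts that the coincidence holds \emph{for some} $A\in 2^X$, not merely that antipodal regions which happen to be strongly near have equal images. In the classical Borsuk--Ulam theorem this existence is the whole content and is supplied by genuine topology, whereas here it must be extracted from the structural hypotheses alone. I would therefore concentrate most of the care on pinning down precisely what \emph{compatible} strong proximities buys us, since that compatibility is what licenses the transport across $f$ and is what must guarantee that at least one admissible antipodal pair is strongly near (for instance via (snN0), which makes the top region strongly near every region). The remaining legitimacy check, that the singleton reduction through (snN6) applies to feature vectors in $\mathbb{R}^n$, is routine. Once the existence and the compatibility-to-transport implication are secured, the conclusion follows from the $S^n$ case word for word with $S^n$ replaced by $X$, so I would close the proof by citing Theorem~\ref{thm:reSnBUT} and noting that its argument nowhere used the geometry of the sphere.
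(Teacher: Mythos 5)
Your proposal takes essentially the same route as the paper: the paper supplies no independent proof of Theorem~\ref{thm:reRnBUT} (it is imported from \cite{PetersTozzi2016reBUT}), justifying it only by the remark immediately preceding the statement that the proofs of Theorem~\ref{thm:reSnBUT} and Theorem~\ref{thm:reSnBUTworldsheet} nowhere depend on the regions lying on $S^n$ --- which is exactly your degeometrization step --- and the transport-plus-(snN6) singleton-collapse mechanism you spell out is the same one the paper uses for Theorem~\ref{thm:Borsuk}. The existence-clause worry you flag is legitimate, but it is a gap shared by the paper's own argument rather than a divergence from it.
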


\begin{theorem}\label{thm:reRnBUTsheet} {\bf $\boldsymbol{\strBUT}$ for Worldsheets}.\\
Suppose that $(X, \tau_X, {\sn}_X) $ and $(\mathbb{R}^n, \tau_{\mathbb{R}^n}, {\sn}_{\mathbb{R}^n})$ are topological spaces endowed with compatible strong proximities, $\sh M, \righthalfcap \sh M\subset X$.
If $f:2^X\longrightarrow \mathbb{R}^n$ is $\sn$-Re.s.p continuous, then $f(\sh M) = f(\righthalfcap \sh M)$ for some $\sheet M\in 2^X$.
\end{theorem}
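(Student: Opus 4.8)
The plan is to reduce the statement to the region-based Borsuk-Ulam Theorem already recorded as Theorem~\ref{thm:reRnBUT}, mirroring the reduction used for worldsheets over $S^n$ in Theorem~\ref{thm:reSnBUTworldsheet}. The essential observation is that a worldsheet is, by definition, a nonempty region of $X$ --- specifically, a region every subregion of which contains at least one string. Hence each worldsheet is an element of the power set $2^X$, and the pair $\sh M, \righthalfcap \sh M$ furnishes a legitimate pair of inputs for the Re.s.p.c.\ mapping $f : 2^X \to \mathbb{R}^n$.

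First I would confirm that the antipodality relation on worldsheets specializes the region-level antipodality invoked in Theorem~\ref{thm:reRnBUT}. By the definition given earlier in the paper, $\sh M$ and $\righthalfcap \sh M$ are antipodal provided there is at least one string $\str A \in \sh M$ and one string $\str B \in \righthalfcap \sh M$ with $\str A \cap \str B = \emptyset$; in particular $\righthalfcap \sh M \in 2^X \setminus \sh M$. Thus the hypotheses of Theorem~\ref{thm:reRnBUT} are satisfied upon setting $A = \sh M$ and $\righthalfcap A = \righthalfcap \sh M$.

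Next I would invoke Theorem~\ref{thm:reRnBUT} directly. Since $(X,\tau_X,{\sn}_X)$ and $(\mathbb{R}^n,\tau_{\mathbb{R}^n},{\sn}_{\mathbb{R}^n})$ carry compatible strong proximities and $f$ is $\sn$-Re.s.p.\ continuous, the theorem produces a region whose image under $f$ agrees with the image of its antipode. Specializing that region to the worldsheet $\sh M$ yields $f(\sh M) = f(\righthalfcap \sh M)$ for some $\sheet M \in 2^X$, which is exactly the desired conclusion.

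The only step that calls for any attention --- and it is slight --- is checking that substituting worldsheets for arbitrary regions preserves the strong-proximity hypotheses required by Theorem~\ref{thm:reRnBUT}. Because every worldsheet inherits its topology and its strong-proximity structure as a subset of $X$, no fresh verification is needed: the worldsheet case is a genuine instance of the region case, not a generalization of it. I expect the main, though still modest, conceptual point to be articulating why the worldsheet-specific notion of antipodality is no stronger than the region-level notion used in the parent theorem, so that the reduction is legitimate rather than merely formal.
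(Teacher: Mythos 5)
Your proposal is correct and follows essentially the same route as the paper's own proof: the paper likewise proves Theorem~\ref{thm:reRnBUTsheet} by treating each worldsheet as a spatial subregion of $X$ and substituting $\sh M, \righthalfcap \sh M$ for the regions $A, \righthalfcap A$ in Theorem~\ref{thm:reRnBUT}. Your additional remarks on antipodality compatibility only make explicit what the paper leaves implicit.
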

\begin{proof}
Let each worldsheet be a spatial subregion of $X$.  Let $\sh M, \righthalfcap \sh M\subset X$.  Swap out $A,\righthalfcap A\in 2^{\mathbb{R}^n}$ with $\sh M, \righthalfcap \sh M$ in Theorem~\ref{thm:reRnBUT} and the result follows.
\end{proof} 

\begin{example}
Let $\sh M, \righthalfcap \sh M$ in Fig.~\ref{fig:re2BUT} represent a pair of antipodal worldsheets.  For simplicity, we consider only the feature worldsheet area.  Let $f:2^X\longrightarrow \mathbb{R}^n$ map a worldsheet $\sh M$ to a real number that is the area of $\sh M$, \emph{i.e.}, $f(\sh M)\in \mathbb{R}^2$ equals the area of $\sh M$.  It is clear that more than one antipodal worldsheet will have the same area.  Then, from Theorem~\ref{thm:reRnBUTsheet}, $f(\sh M) = f(\righthalfcap \sh M)$ for some $\righthalfcap \sh M\in 2^X$.
\qquad \textcolor{blue}{$\blacksquare$} 
\end{example}

\begin{lemma}\label{lemma:rexBorsuk} {\bf Region Descriptions in a $k$-Dimensional Space}.\\
Suppose that $(2^{S^n}, \tau_{2^{S^n}}, {\sn}_{2^{S^n}}) $ and $(\mathbb{R}^k, \tau_{\mathbb{R}^k}, {\sn}_{\mathbb{R}^k})$ are topological spaces endowed with compatible strong proximities and $f$ on the family of regions $2^{S^n}$ maps into $\mathbb{R}^k, k > 0$.
If $f(A)\in \mathbb{R}^k$ is a description common to antipodal regions $\righthalfcap A\in 2^{S^n}\setminus A$, then $f(A) = f(\righthalfcap A)$ in $\mathbb{R}^k$ for some $\righthalfcap A$ in $2^{S^n}$ for $f:2^{S^n}\longrightarrow \mathbb{R}^k$.
\end{lemma}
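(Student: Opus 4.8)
The plan is to obtain the statement as a dimension-free restatement of the descriptive region result Lemma~\ref{lemma:reBorsuk} (equivalently Theorem~\ref{thm:reSnBUT}), the point being that neither of those arguments ever used the coincidence of the sphere dimension $n$ with the dimension of the feature space. In the classical setting the equality of dimensions in $f\colon S^n\to\mathbb{R}^n$ (Theorem~\ref{thm:Borsuk-Ulam1933one}) is indispensable; here, by contrast, the conclusion is governed entirely by whether two antipodal regions carry a common feature vector, a comparison that is blind to the number $k$ of coordinates in the codomain. I would therefore fix $k>0$ arbitrarily and re-run the earlier reasoning with $\mathbb{R}^n$ replaced throughout by $\mathbb{R}^k$.

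Concretely, I would first fix the compatible strong descriptive proximities ${\sn}_{2^{S^n}}$ on $2^{S^n}$ and ${\sn}_{\mathbb{R}^k}$ on $\mathbb{R}^k$, together with the feature map $f\colon 2^{S^n}\longrightarrow\mathbb{R}^k$ that sends each region to its description $f(A)\in\mathbb{R}^k$. By hypothesis there is an antipodal pair $A,\righthalfcap A\in 2^{S^n}$, with $\righthalfcap A\in 2^{S^n}\setminus A$, for which the single vector $f(A)$ is a description common to both regions.

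The core step is then to read off the equality of images. Since $f(A)$ describes $A$ and also describes $\righthalfcap A$, the two antipodal regions have matching descriptions; invoking axiom (dsnP6), which characterises descriptive strong nearness of points by $\{x\}\ \snd\ \{y\}\Leftrightarrow\Phi(x)=\Phi(y)$ and which holds in $\mathbb{R}^k$ for every $k>0$, the shared description is recorded as descriptive strong contact that the Re.d.s.p.c. map $f$ preserves, forcing $f(A)=f(\righthalfcap A)$ in $\mathbb{R}^k$. This is exactly the conclusion of Lemma~\ref{lemma:reBorsuk} after the substitution $\mathbb{R}^n\mapsto\mathbb{R}^k$.

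The only place demanding care --- and what I regard as the ``main obstacle,'' though it is a conceptual rather than a technical one --- is to confirm that the whole descriptive apparatus is insensitive to $k$: the descriptive intersection $\dcap$, the descriptive strong-proximity axioms (dsnP0)--(dsnP6), and the very notion of a common description are all expressed purely through equality of feature vectors and never reference the number of coordinates. Once this dimension-independence is made explicit, the substitution is legitimate and the lemma follows; I expect no genuine difficulty, since the content of the statement is precisely that the descriptive Borsuk--Ulam phenomenon, unlike its topological ancestor, places no relation between $n$ and $k$.
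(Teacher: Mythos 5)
Your proposal is correct in the framework of the paper, but it takes a genuinely different route from the paper's own proof. The paper does not argue by dimension-substitution into Lemma~\ref{lemma:reBorsuk}; instead, after restating the hypothesis that $f(A)$ is a feature vector describing both $A$ and $\righthalfcap A$, it verifies \emph{only the case $k=1$} on a concrete configuration: shape-connected disk regions tiling a bounded rectangular planar space, with $f(A)$ defined as an adjacency count (value $2$ for corner regions, a larger level otherwise), so that any two antipodal corner regions $A, \righthalfcap A$ satisfy $f(A) = f(\righthalfcap A)$ in $\mathbb{R}^1$. In other words, the paper's ``proof'' is an instantiation exhibiting that the hypothesis can be realized and the conclusion then holds, rather than a derivation covering arbitrary $k > 0$. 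Your argument --- fix $k$, observe that the descriptive apparatus ($\dcap$, the axioms (dsnP0)--(dsnP6), the notion of common description) never references the number of coordinates, and then read the equality $f(A) = f(\righthalfcap A)$ off from the fact that the single-valued map $f$ assigns to $\righthalfcap A$ a description that the hypothesis says equals $f(A)$ --- is uniform in $k$ and makes explicit the dimension-independence that is the actual content of the lemma; it is closer to a complete proof than what the paper offers. One small caution: axiom (dsnP6) as stated governs singletons $\{x\}\ \snd\ \{y\}$, not regions, so strictly speaking your ``core step'' should rest on the hypothesis itself (a common description plus single-valuedness of $f$ forces equality of the two feature vectors) rather than on (dsnP6); with that adjustment your reduction stands, and it buys generality where the paper's example buys only concreteness.
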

\begin{proof}
Let $k > 0$.  The assumption is that $f(A)$ is a feature vector in a $k$-dimensional feature space that describes $A\in 2^{S^n}$ as well as at least one other region $\righthalfcap A\in 2^{S^n}\setminus A$ in $2^{S^n}$.  We consider only the case for $k = 1$ for shape-connected regions that are disks in a finite, bounded, rectangular shaped space in the Euclidean plane, where every region has 4,3 or 2 adjacent disks, {\em e.g.}, each corner region has at most 2 adjacent disks.
Let
\[
f(A) =
 \begin{cases}
 2, &\text{if $A$ is a corner region with 2 adjacent polygons},\\
 level > 2, &\text{otherwise}.
 \end{cases}
\]
Let $A,\righthalfcap A$ be antipodal corner regions.  Then $f(A) = f(\righthalfcap A)$ in $\mathbb{R}^1$. 
\end{proof}

Lemma~\ref{lemma:rexBorsuk} leads to a version of reBUT for region descriptions in a $k$-dimensional feature space.

\begin{theorem}\label{thm:rexBUT} {\bf Proximal Region-Based Borsuk-Ulam Theorem (rexBUT)}.\\
Suppose that $(X, \tau_X, {\sn}_X) $ and $(\mathbb{R}^k, \tau_{\mathbb{R}^k}, {\sn}_{\mathbb{R}^k}), k > 0$ are topological spaces endowed with compatible strong proximities.
If $f:2^X\longrightarrow \mathbb{R}^k$ is $\sn$-Re.s.p continuous, then $f(A) = f(\righthalfcap A)$ for some $A\in 2^X$.
\end{theorem}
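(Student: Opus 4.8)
The plan is to follow the ``swap out'' template already used for Theorem~\ref{thm:reRnBUTsheet} and Theorem~\ref{cor:strBUTsimplest}, obtaining rexBUT as a direct generalization of the $k$-dimensional result just established. The present statement differs from Theorem~\ref{thm:reRnBUT} in precisely two respects: the feature space is an arbitrary $\mathbb{R}^k$ with $k>0$ rather than $\mathbb{R}^n$, and the domain is $2^X$ for an arbitrary nonempty set $X$ rather than $2^{S^n}$. Since Lemma~\ref{lemma:rexBorsuk} has already handled the passage to an arbitrary $\mathbb{R}^k$ over the special domain $2^{S^n}$, the only work remaining is to remove the dependence on $S^n$.

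First I would recall the mechanism supplied by the hypothesis: because $f:2^X\longrightarrow \mathbb{R}^k$ is $\sn$-Re.s.p continuous, we have $A\ {\sn}_X\ B \Rightarrow f(A)\ {\sn}_{\mathbb{R}^k}\ f(B)$ for all $A,B\in 2^X$. For a pair of antipodal regions $A,\righthalfcap A$ carrying a common feature vector $f(A)\in\mathbb{R}^k$, the matching descriptions place the images in strong proximity in the feature space, and the singleton axiom (snN6), read on the feature vectors, collapses this to $f(A)=f(\righthalfcap A)$. This is exactly the content of Lemma~\ref{lemma:rexBorsuk}, where an explicit corner-counting $f$ witnesses that antipodal corner regions receive equal values.

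Next I would invoke the observation recorded in the remark preceding this theorem: neither the proof of Lemma~\ref{lemma:rexBorsuk} nor the surrounding arguments use the hypersphere structure of $S^n$---they use only that $2^{S^n}$ is a region-based object space equipped with a strong proximity and that $\mathbb{R}^k$ is a compatible feature space. Replacing $2^{S^n}$ throughout by $2^X$, and $A,\righthalfcap A\in 2^{S^n}$ by $A,\righthalfcap A\in 2^X$, transports the conclusion verbatim and yields $f(A)=f(\righthalfcap A)$ for some $A\in 2^X$, as required.

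The step I expect to be the main obstacle is not any calculation but the honest verification that this generalization is legitimate. One must confirm that the compatibility of the strong proximities ${\sn}_X$ and ${\sn}_{\mathbb{R}^k}$, together with the existence of at least one antipodal pair of regions sharing a feature vector, follows from the hypotheses alone and does not covertly depend on $X=S^n$. Since antipodality was defined purely through disjoint substrings with no reference to an embedding, and the description-matching that forces the equality is a statement about feature vectors in $\mathbb{R}^k$ only, this dependence is genuinely absent and the swap is valid.
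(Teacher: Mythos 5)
Your proposal is correct by the standards of argument this paper uses, but it takes a genuinely different route from the paper's own proof. The paper derives Theorem~\ref{thm:rexBUT} from Theorem~\ref{thm:reRnBUT} (the result already stated for an arbitrary domain $2^X$ but with feature space $\mathbb{R}^n$), and the only ``swap'' it performs is in the codomain: replace $f:2^X\longrightarrow \mathbb{R}^n$ by $f:2^X\longrightarrow \mathbb{R}^k$, $k>0$. You instead start from Lemma~\ref{lemma:rexBorsuk} (already stated for feature space $\mathbb{R}^k$ but with domain $2^{S^n}$) and swap the domain: replace $2^{S^n}$ by $2^X$, justifying the replacement by the observation that the lemma's proof --- the explicit corner-region construction in a bounded rectangular planar space --- never uses the hypersphere structure of $S^n$. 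So the two proofs generalize the two parameters in opposite orders: the paper fixes the general domain first and then relaxes the feature dimension, while you fix the general feature dimension first and then relax the domain. Both pass through the same underlying mechanism (region-based strong proximal continuity plus matching descriptions of antipodal regions forcing equal feature vectors). What your route buys is that it leans on a result actually proved inside this paper (Lemma~\ref{lemma:rexBorsuk}) rather than on Theorem~\ref{thm:reRnBUT}, which is cited from~\cite{PetersTozzi2016reBUT} without proof here, and it aligns with the paper's own framing sentence that the lemma ``leads to'' rexBUT; what the paper's route buys is brevity, since the domain generalization was already absorbed into Theorem~\ref{thm:reRnBUT} and only the dimension of the feature space remains to be adjusted. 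Your closing caution --- that one must check the generalization does not covertly use $X=S^n$ --- is exactly the right thing to verify, and your verification (antipodality and description-matching are defined without reference to an embedding) is the same justification the paper itself gives, in the remark preceding Theorem~\ref{thm:reRnBUT}, for dropping the hypersphere.
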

\begin{proof}
Swap out $f:2^X\longrightarrow \mathbb{R}^n$ with $f:2^X\longrightarrow \mathbb{R}^k, k > 0$ in the proof of Theorem~\ref{thm:reRnBUT} and the result follows.
\end{proof}

A \emph{string space} is a nonempty set of strings.  A \emph{worldsheet space} is a nonempty set of worldsheets.

\begin{corollary}
Let $X$ be a string space.  Assume $(X, \tau_X, {\sn}_X)$, $(\mathbb{R}^k, \tau_{\mathbb{R}^k}, {\sn}_{\mathbb{R}^k})$, $k > 0$ are topological string spaces endowed with compatible strong proximities.
If $f:2^X\longrightarrow \mathbb{R}^k$ is $\sn$-Re.s.p continuous, then $f(\str A) = f(\righthalfcap \str A)$ for some  string $\str A\in 2^X$.
\end{corollary}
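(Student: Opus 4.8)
The plan is to obtain this corollary as an immediate instance of Theorem~\ref{thm:rexBUT} (rexBUT), following the same template used to pass from Theorem~\ref{thm:reRnBUT} to Theorem~\ref{thm:reRnBUTsheet} and from Theorem~\ref{thm:redSnBUT} to Theorem~\ref{cor:strBUTsimplest}. Since a string space $X$ is by definition a nonempty set of strings and, as recalled in the Introduction, every string is a spatial region, each string $\str A$ is already a region $A\in 2^X$ of exactly the kind about which rexBUT speaks. Thus the corollary should require no new machinery beyond a change of vocabulary from ``region'' to ``string.''

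First I would verify that the data of the corollary is literally the data of rexBUT: the compatible strong proximities ${\sn}_X$ and ${\sn}_{\mathbb{R}^k}$ with $k>0$, together with the $\sn$-Re.s.p continuous map $f:2^X\longrightarrow \mathbb{R}^k$, are precisely the hypotheses of Theorem~\ref{thm:rexBUT}. Hence nothing new must be assumed, and the hypotheses transfer verbatim.

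Next I would make the substitution $A\assign \str A$ and $\righthalfcap A\assign \righthalfcap \str A$ throughout the statement of Theorem~\ref{thm:rexBUT}. Because antipodal strings (strings with no point in common) are a special case of antipodal regions, and because $X$ consists entirely of strings so that the antipode $\righthalfcap \str A$ returned by the theorem is itself a string in the space, the conclusion $f(A)=f(\righthalfcap A)$ of rexBUT reads, after the substitution, as $f(\str A)=f(\righthalfcap \str A)$ for some string $\str A\in 2^X$. This is exactly the assertion of the corollary.

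I do not expect a genuine obstacle here: the corollary carries no content beyond rexBUT, and the argument is a pure relabeling. The one point deserving a sentence of care is that the antipodal relation must be read in its string sense, so that the region the theorem produces is guaranteed to lie in the string space; this is automatic, since $X$ was assumed to be a string space and therefore every region the theorem returns is a string in $X$.
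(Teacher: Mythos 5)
Your proposal is correct and matches the paper's own (implicit) treatment: the corollary is stated immediately after Theorem~\ref{thm:rexBUT} precisely as the specialization of rexBUT to a string space, obtained by the same ``swap out regions for strings'' relabeling the paper uses explicitly in the proofs of Theorem~\ref{cor:strBUTsimplest} and Theorem~\ref{thm:reRnBUTsheet}. Your added remark that the antipode returned by the theorem is itself a string because $X$ consists of strings is exactly the (unstated) justification the paper relies on.
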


\begin{corollary}
Let $X$ be a worldsheet space.  Assume $(X, \tau_X, {\sn}_X) $, $(\mathbb{R}^k, \tau_{\mathbb{R}^k}, {\sn}_{\mathbb{R}^k})$, $k > 0$ are topological worldsheet spaces endowed with compatible strong proximities.
If $f:2^X\longrightarrow \mathbb{R}^k$ is $\sn$-Re.s.p continuous, then $f(\sh A) = f(\righthalfcap \sh A)$ for some worldsheet $\sh A\in 2^X$.
\end{corollary}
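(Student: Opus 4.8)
The plan is to derive this corollary as an immediate specialization of the Proximal Region-Based Borsuk-Ulam Theorem (Theorem~\ref{thm:rexBUT}). A worldsheet space is, by definition, a nonempty set of worldsheets, hence in particular a nonempty set; it therefore fits the role of the underlying space $X$ in the hypotheses of rexBUT. Since the corollary already grants that $(X, \tau_X, {\sn}_X)$ and $(\mathbb{R}^k, \tau_{\mathbb{R}^k}, {\sn}_{\mathbb{R}^k})$ with $k > 0$ are endowed with compatible strong proximities, and that $f : 2^X \to \mathbb{R}^k$ is ${\sn}$-Re.s.p continuous, every hypothesis required by Theorem~\ref{thm:rexBUT} is in force verbatim.

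First I would observe that every worldsheet is a region: by the definition of $\sh M$, a worldsheet is a nonempty region whose every subregion contains at least one string. Consequently each worldsheet $\sh A$ is a legitimate member of $2^X$ and may be taken as the generic region $A$ appearing in rexBUT. I would then invoke Theorem~\ref{thm:rexBUT} directly and swap out the generic antipodal pair $A,\righthalfcap A\in 2^X$ for the worldsheet pair $\sh A,\righthalfcap \sh A$, obtaining $f(\sh A) = f(\righthalfcap \sh A)$ for some worldsheet $\sh A\in 2^X$. This mirrors the substitution already used to pass from Theorem~\ref{thm:reRnBUT} to Theorem~\ref{thm:reRnBUTsheet}, and from Theorem~\ref{thm:redSnBUT} to Theorem~\ref{cor:strBUTsheet}.

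The only point needing care --- and the nearest thing to an obstacle --- is checking that the antipodal relation for worldsheets is compatible with the antipodal/region hypothesis carried by rexBUT. Here I would appeal to the earlier definition: $\sh A,\righthalfcap \sh A$ are antipodal precisely when there exist strings $\str P\in \sh A$ and $\str Q\in \righthalfcap \sh A$ with $\str P\cap \str Q = \emptyset$, which is exactly the disjoint-substring condition demanded of antipodal regions in rexBUT. Thus $\righthalfcap \sh A$ ranges over admissible antipodes of $\sh A$, and the relabeling is justified. No new analytic content is required beyond this consistency check: the strong-proximal continuity of $f$ together with axiom (snN6) --- which forces $\{x\}\ {\sn}\ \{y\} \Leftrightarrow x = y$ --- drives the matching-description equality exactly as in the proof of Theorem~\ref{thm:Borsuk}, so the conclusion $f(\sh A) = f(\righthalfcap \sh A)$ follows.
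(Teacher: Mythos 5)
Your proposal is correct and follows essentially the route the paper intends: the paper states this corollary immediately after Theorem~\ref{thm:rexBUT} without proof, treating it as a direct specialization of rexBUT to worldsheet spaces, which is exactly your substitution of the worldsheet pair $\sh A,\righthalfcap \sh A$ for the generic antipodal regions $A,\righthalfcap A\in 2^X$ --- the same ``swap out'' pattern the paper uses to prove Theorem~\ref{cor:strBUTsheet} and Theorem~\ref{thm:reRnBUTsheet}. Your added consistency check on the antipodal relation and the remark tracing the equality back to the mechanism of Theorem~\ref{thm:Borsuk} go slightly beyond what the paper records, but they are consistent with its framework and introduce no gap.
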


Let $A$ be a region in the family of sets $2^X$, $\Phi(A)$ a feature vector with $k$ components that describes region $A$.  A straightforward extension of Theorem~\ref{thm:rexBUT} leads to a continuous mapping of antipodal regions in an $n$-dimensional space in $X$ to regions in a $(k)$-dimensional feature space $\mathbb{R}^{k}$. 
\begin{theorem}\label{thm:re2reBUT} {\bf Region-2-Region Based Borsuk-Ulam Theorem (re2reBUT)}.\\
Suppose that $(X, \tau_X, {\sn}_X)$, where space $X$ is $n$-dimensional and $(\mathbb{R}^{k}, \tau_{\mathbb{R}^{k}}, {\sn}_{\mathbb{R}^{k}}), k > 0$ are topological spaces endowed with compatible strong proximities.
If $f:2^X\longrightarrow \mathbb{R}^{k}$ is $\sn$-Re.s.p continuous, then $f(A) = f(\righthalfcap A)$ for some $A\in 2^X$.
\end{theorem}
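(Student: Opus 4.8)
The plan is to reduce re2reBUT to the Proximal Region-Based Borsuk-Ulam Theorem (Theorem~\ref{thm:rexBUT}) by a relabelling argument, exactly as Theorem~\ref{thm:rexBUT} was itself obtained from Theorem~\ref{thm:reRnBUT}. The only formal difference between the present statement and Theorem~\ref{thm:rexBUT} is the explicit hypothesis that the domain space $X$ is $n$-dimensional; since neither the strong-proximity axioms (snN0)--(snN6) nor the definition of a Re.s.p.c. mapping make any use of the dimension of the domain, this added hypothesis is inert, and the conclusion should follow directly once the hypotheses are matched up.

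First I would fix a region $A\in 2^X$ together with an antipodal region $\righthalfcap A\in 2^X\setminus A$, and record that the feature map $\Phi$ carries each region of $X$ to a feature vector with $k$ components, so that $f(A),f(\righthalfcap A)\in \mathbb{R}^k$ are single points of the feature space. This is precisely the setting of Lemma~\ref{lemma:rexBorsuk}, which already guarantees, for $k>0$, the existence of at least one pair of antipodal regions whose descriptions agree in $\mathbb{R}^k$; that lemma therefore supplies the required witness $A$ and its antipode.

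Next I would invoke the hypothesis that $f:2^X\longrightarrow \mathbb{R}^k$ is $\sn$-Re.s.p continuous: whenever $A\ {\sn}_X\ \righthalfcap A$, the definition of a region-based strongly proximal continuous map yields $f(A)\ {\sn}_{\mathbb{R}^k}\ f(\righthalfcap A)$. Because $f(A)$ and $f(\righthalfcap A)$ are singletons in $\mathbb{R}^k$, the strong-proximity axiom (snN6), namely $\{p\}\ \sn\ \{q\}\Leftrightarrow p=q$, forces $f(A)=f(\righthalfcap A)$, which is the desired conclusion. In effect this repeats the singleton step used in the proof of Theorem~\ref{thm:Borsuk}, now applied to the images of antipodal regions rather than of antipodal points.

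The step I expect to carry the real weight is the very first one: confirming that the hypotheses of Theorem~\ref{thm:rexBUT} are genuinely met after the substitution, in particular that the compatibility of the two strong proximities ${\sn}_X$ and ${\sn}_{\mathbb{R}^k}$ survives when $X$ is declared $n$-dimensional and the codomain dimension $k$ is decoupled from $n$. Once that compatibility is verified, the argument is purely a matter of swapping $\mathbb{R}^n$ for $\mathbb{R}^k$ in the proof of Theorem~\ref{thm:reRnBUT}, so no new computation is required; the only subtlety is notational bookkeeping rather than any genuine mathematical obstacle.
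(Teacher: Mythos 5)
Your proposal takes essentially the same route as the paper: its proof of Theorem~\ref{thm:re2reBUT} is exactly the reduction you describe, citing Lemma~\ref{lemma:rexBorsuk} and swapping the codomain in the proof of Theorem~\ref{thm:rexBUT} (which itself was obtained by the same relabelling of Theorem~\ref{thm:reRnBUT}). Your extra unpacking of the singleton step via axiom (snN6), echoing the proof of Theorem~\ref{thm:Borsuk}, only makes explicit the mechanism on which the paper's chain of reductions already rests, so there is no substantive difference in approach.
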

\begin{proof}
From Lemma~\ref{lemma:rexBorsuk} and swapping out $f:2^X\longrightarrow \mathbb{R}^n$ with $f:2^X\longrightarrow \mathbb{R}^{n+k}, k > 0$ in the proof of Theorem~\ref{thm:rexBUT}, the result follows.
\end{proof}


From Theorem~\ref{thm:re2reBUT}, we obtain the following results relative to strings and worldsheets.

\begin{corollary}\label{cor:stringSpace}
Let $X$ be a string space.  Assume $(X, \tau_X, {\sn}_X)$, $(\mathbb{R}^k, \tau_{\mathbb{R}^k}, {\sn}_{\mathbb{R}^k})$, $k > 0$ are topological string spaces endowed with compatible strong proximities.
If $f:2^X\longrightarrow \mathbb{R}^k$ is $\sn$-Re.s.p continuous, then $f(\str A) = f(\righthalfcap \str A)$ for some set of strings $\str A\in 2^X$.
\end{corollary}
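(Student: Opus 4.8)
The plan is to derive this corollary directly from the Region-2-Region Based Borsuk-Ulam Theorem (Theorem~\ref{thm:re2reBUT}) by specializing the generic region $A\in 2^X$ to a set of strings. First I would recall from the Introduction that every string is a spatial region, so when $X$ is taken to be a string space (a nonempty set of strings), each element $\str A\in 2^X$ is a collection of strings and hence a bona fide region in the sense required by Theorem~\ref{thm:re2reBUT}. This places us squarely within the hypotheses of that theorem, with the only adjustment being a reinterpretation of ``region'' as ``set of strings.''

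Next I would verify that the structural assumptions transfer verbatim. Since $(X,\tau_X,{\sn}_X)$ and $(\mathbb{R}^k,\tau_{\mathbb{R}^k},{\sn}_{\mathbb{R}^k})$ are by assumption topological string spaces endowed with compatible strong proximities, and $f:2^X\longrightarrow \mathbb{R}^k$ is $\sn$-Re.s.p continuous, every premise of Theorem~\ref{thm:re2reBUT} is satisfied once strings are recognized as regions. The substitution step then follows the same ``swap out'' pattern used in the proofs of Theorem~\ref{cor:strBUTsimplest} and Theorem~\ref{thm:reRnBUTsheet}: replace the generic antipodal pair $A,\righthalfcap A$ with the string-valued pair $\str A,\righthalfcap \str A\in 2^X$, where antipodality is read in the sense of the string antipodes defined in the Preliminaries.

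Applying Theorem~\ref{thm:re2reBUT} under this identification yields $f(\str A) = f(\righthalfcap \str A)$ for some set of strings $\str A\in 2^X$, which is exactly the desired conclusion. The main difficulty, such as it is, lies not in the logical derivation but in the bookkeeping needed to confirm that a set of strings genuinely inherits the region-based strong proximity structure and that antipodal sets of strings actually exist in $2^X$. Once the definition of antipodal strings is invoked, this is immediate, and the corollary reduces to a one-line specialization of the region-based result to the string space setting.
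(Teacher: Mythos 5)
Your proposal is correct and matches the paper's own (implicit) argument: the paper presents this corollary as an immediate specialization of Theorem~\ref{thm:re2reBUT}, obtained by reading the generic antipodal regions $A,\righthalfcap A\in 2^X$ as sets of strings $\str A,\righthalfcap \str A$ in the string space $X$, exactly the ``swap out'' substitution you describe. No further argument is given or needed beyond that identification.
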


\begin{corollary}
Let $X$ be a worldsheet space.  Assume $(X, \tau_X, {\sn}_X)$, $(\mathbb{R}^k, \tau_{\mathbb{R}^k}, {\sn}_{\mathbb{R}^k})$, $k > 0$ are topological worldsheet spaces endowed with compatible strong proximities.
If $f:2^X\longrightarrow \mathbb{R}^k$ is $\sn$-Re.s.p continuous, then $f(\sh A) = f(\righthalfcap \sh A)$ for some set of worldsheets $\sh A\in 2^X$.
\end{corollary}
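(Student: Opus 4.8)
The plan is to obtain this statement as a direct specialization of the Region-2-Region Based Borsuk-Ulam Theorem (Theorem~\ref{thm:re2reBUT}), mirroring exactly the way Corollary~\ref{cor:stringSpace} specializes that theorem to a string space. The only thing I need to verify is that a worldsheet is an admissible object for Theorem~\ref{thm:re2reBUT}, after which the conclusion is inherited with no further work.

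First I would recall the definition: a worldsheet $\sh A$ is a nonempty region that is completely covered by strings, i.e.\ every subregion of $\sh A$ contains at least one string. In particular $\sh A$ is a region, and since $X$ is a worldsheet space, every $\sh A\in 2^X$ is a region in the family $2^X$. Thus the objects $\sh A$ and $\righthalfcap \sh A$ are precisely regions of the kind to which Theorem~\ref{thm:re2reBUT} applies. Next I would note that the standing hypotheses of the corollary---that $(X,\tau_X,{\sn}_X)$ and $(\mathbb{R}^k,\tau_{\mathbb{R}^k},{\sn}_{\mathbb{R}^k})$ are endowed with compatible strong proximities, that $k>0$, and that $f:2^X\longrightarrow\mathbb{R}^k$ is $\sn$-Re.s.p continuous---are identical to the hypotheses of Theorem~\ref{thm:re2reBUT}.

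The single substantive step is then to substitute $A=\sh A$ and $\righthalfcap A=\righthalfcap \sh A$ into Theorem~\ref{thm:re2reBUT}. Since $\sh A,\righthalfcap \sh A\in 2^X$, the theorem yields $f(\sh A)=f(\righthalfcap \sh A)$ for some worldsheet $\sh A\in 2^X$, which is exactly the asserted conclusion.

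The main obstacle, such as it is, lies not in the logical step but in confirming that nothing in the passage from a general region to a worldsheet disturbs the hypotheses: one must check that the strong proximity ${\sn}_X$ restricted to the worldsheet space and its compatibility with ${\sn}_{\mathbb{R}^k}$ are preserved, and that the s.p.c. property of $f$ is unaffected by narrowing attention to worldsheet-valued arguments. Because the proof of Theorem~\ref{thm:re2reBUT} (via Lemma~\ref{lemma:rexBorsuk}) never invokes any geometric feature beyond the region-level strong-proximity structure and the existence of an antipode $\righthalfcap A$, this verification is immediate, and the corollary follows.
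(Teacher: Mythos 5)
Your proof is correct and follows the same route as the paper: the paper presents this corollary (together with Corollary~\ref{cor:stringSpace}) as an immediate specialization of Theorem~\ref{thm:re2reBUT}, obtained by noting that a worldsheet $\sh A\in 2^X$ is itself a region in $2^X$ and substituting $\sh A,\righthalfcap \sh A$ for $A,\righthalfcap A$ --- exactly the ``swap out'' argument you give, which also mirrors the paper's proofs of Theorems~\ref{thm:reSnBUTworldsheet} and~\ref{thm:reRnBUTsheet}.
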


\begin{example}
Let the Euclidean spaces $S^2$ and $\mathbb{R}^3$ be endowed with the strong proximity $\sn$ and let $\str A,\righthalfcap \str A$ be antipodal strings in $S^2$.   Further, let $f$ be a proximally continuous mapping on $2^{S^2}$ into $2^{\mathbb{R}^4}$ defined by
\begin{align*}
\str A &\in 2^{S^2},\\
\Phi &: 2^{S^2}\longrightarrow \mathbb{R}^3,\ \mbox{defined by}\\
\Phi(\str A) &= \left(\mbox{bounded,finite,length}\right)\in \mathbb{R}^3,\ \mbox{and}\\
f:2^{S^2} &\longrightarrow 2^{\mathbb{R}^4},\ \mbox{defined by}\\ 
f(2^{S^2}) &= \left\{\Phi(\str A)\in \mathbb{R}^3: \str A\in 2^{S^2}\right\}\in 2^{\mathbb{R}^4}.\mbox{\qquad \textcolor{blue}{$\blacksquare$}}
\end{align*}
\end{example}


\section{Application}
In this section, a sample application of strBUT is given in terms of Electroencephalography (EEG), which is an electrophysiological monitoring method used to record electrical activity in the brain.  The strBUT variant of the Borsuk-Ulam Theorem uses particles with closed trajectories instead of points.  The usual continuous function required by BUT is replaced by a proximally continuous function from region-based forms of BUT (reBUT)~\cite{PetersTozzi2016reBUT}~\cite[\S 5.7]{Peters2016ISRLcomputationalProximity}, providing the foundation for applications of strBUT.  This guarantees that whenever a pair of strings (spatial regions that are worldlines) are strongly close (near enough for the strings to have common elements), then we know that their mappings are also strongly close.  In effect, strongly close strings are strings with junctions that map to strongly close sets of vectors in $\mathbb{R}^n$.  

By way of application of the proposed framework, the closed paths described by strBUT represent biochemical pathways occurring in eukaryotic cells.   Indeed, in the cells of animals, the molecular components and signal pathways are densely connected with the rest of an animal's systems.   The tight coupling among different activities such as transcription, domain recombination and cell differentiation, gives rise to a signaling system that is in charge of receiving and interpreting environmental inputs.   Transmembrane molecular mechanisms continuously sense the external milieu, leading to amplification cascades and mobilization of many different actuators.  An intertwined, every changing interaction occurs between incoming signals and inner controlling mechanisms.  The string paths are closed and display a hole in their structure.  

In effect, real metabolic cellular patterns can be described in abstract terms as trajectories traveling on a donutlike structure.  Indeed, every biomolecular pathway in a cell is a closed string, intertwined with strong proximities~\cite[\S 1.4, 1.9]{Peters2016ISRLcomputationalProximity} with other strings to preserve the general homeostasis.

\setlength{\intextsep}{0pt}
\begin{wrapfigure}[11]{R}{0.35\textwidth}
\begin{minipage}{3.2 cm}
\centering
\includegraphics[width=37mm]{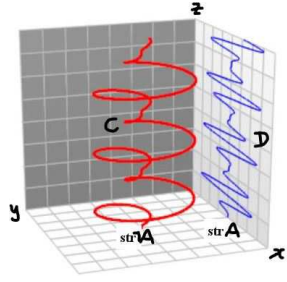}
\caption[]{\footnotesize $\boldsymbol{2^{S^2}\mapsto \mathbb{R}^3}$}
\label{fig:R2toR3}
\end{minipage}
\end{wrapfigure}
$\mbox{}$\\
\vspace{3mm}

This leads to an EEG scenario that fits into the framework described by Corollary~\ref{cor:stringSpace}. Let the path $D$ in Fig.~\ref{fig:R2toR3} represent a planar EEG trace and let $\str A\in 2^X$ be the string defined by the EEG trace, where $X = \mathbb{R}^2$ is a topological string space equipped with a strong proximity ${\sn}_X$.  In addition, let $R^3$ be a topological string space equipped with the the strong proximity ${\sn}_{\mathbb{R}^3}$. Further, $f:2^X\longrightarrow \mathbb{R}^3$ is a $\sn$-Re.s.p continuous mapping.  From Corollary~\ref{cor:stringSpace}, we can find the antipodal string $\righthalfcap \str A$ such that  $f(\str A) = f(\righthalfcap \str A)$. 

\begin{example}\label{ex:twistingEEGsignal}
The proximally continuous mapping from a planar EEG string $\str A$ in $\mathbb{R}^2$ to an EEG string $f(\str A)$ in $\mathbb{R}^3$ is represented in Fig.~\ref{fig:R2toR3}.  In this example, $\str A$ is defined by an EEG signal following along the path $D$ in the $xz$-plane.  And $\str A \mapsto f(\str A)$ in 3-space, {\em i.e.}, there is a proximally continuous mapping from $\str A$ in 2-space to $f(\str A)$ in 3-space.  The added dimension is a result of considering the location of each point $(x,z)$ along the path $D$ as well as the twisting feature (call it $twist$) defined by 1.2*(1-cos(2.5*t))*cos(5*t) at time $t$.  That is, each point $(x,z)$ in $\str A$ corresponds to a feature vector $(x,z,twist)$ in $f(\str A)$ in 3-space.  A sample twist of the EEG signal is defined by twist(x,z) = 1.2(1-zcos(2.5x))cos(5x).
\qquad \textcolor{blue}{$\blacksquare$} 
\end{example}

\begin{figure}[!ht]
\centering
\includegraphics[width=55mm]{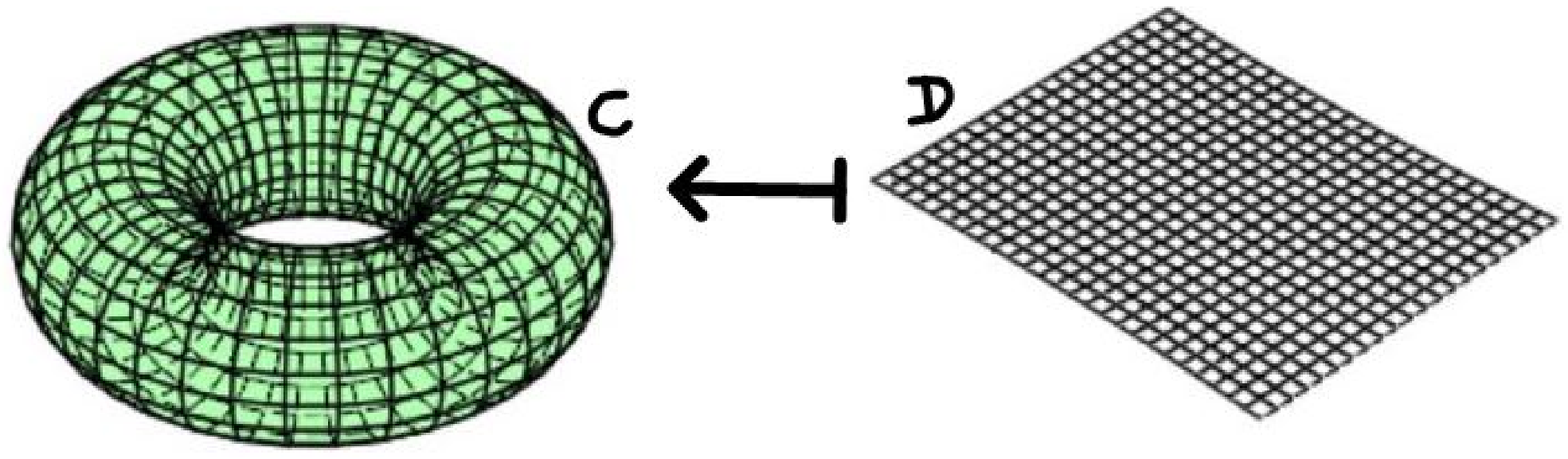}
\caption[]{$\boldsymbol{\mbox{\bf worldsheet}\ \sh D\mapsto\ \mbox{\bf torus}C}$}
\label{fig:R2toTorus}
\end{figure}
$\mbox{}$\\
\vspace{2mm} 

Combining the observations in Example~\ref{ex:rolledUpWorldsheet}(rolled up worldsheet to obtain a worldsheet cylinder) and Example~\ref{ex:bendedUpWorldsheetTorus} (bending a worldsheet cylinder to obtain a worldsheet torus), we obtain a new shape (namely, a torus) to represent the twists and turns of the movements of EEG signals along the surface of a torus (see Fig.~\ref{fig:R2toTorus}).

\begin{example}
\includegraphics[width=15mm]{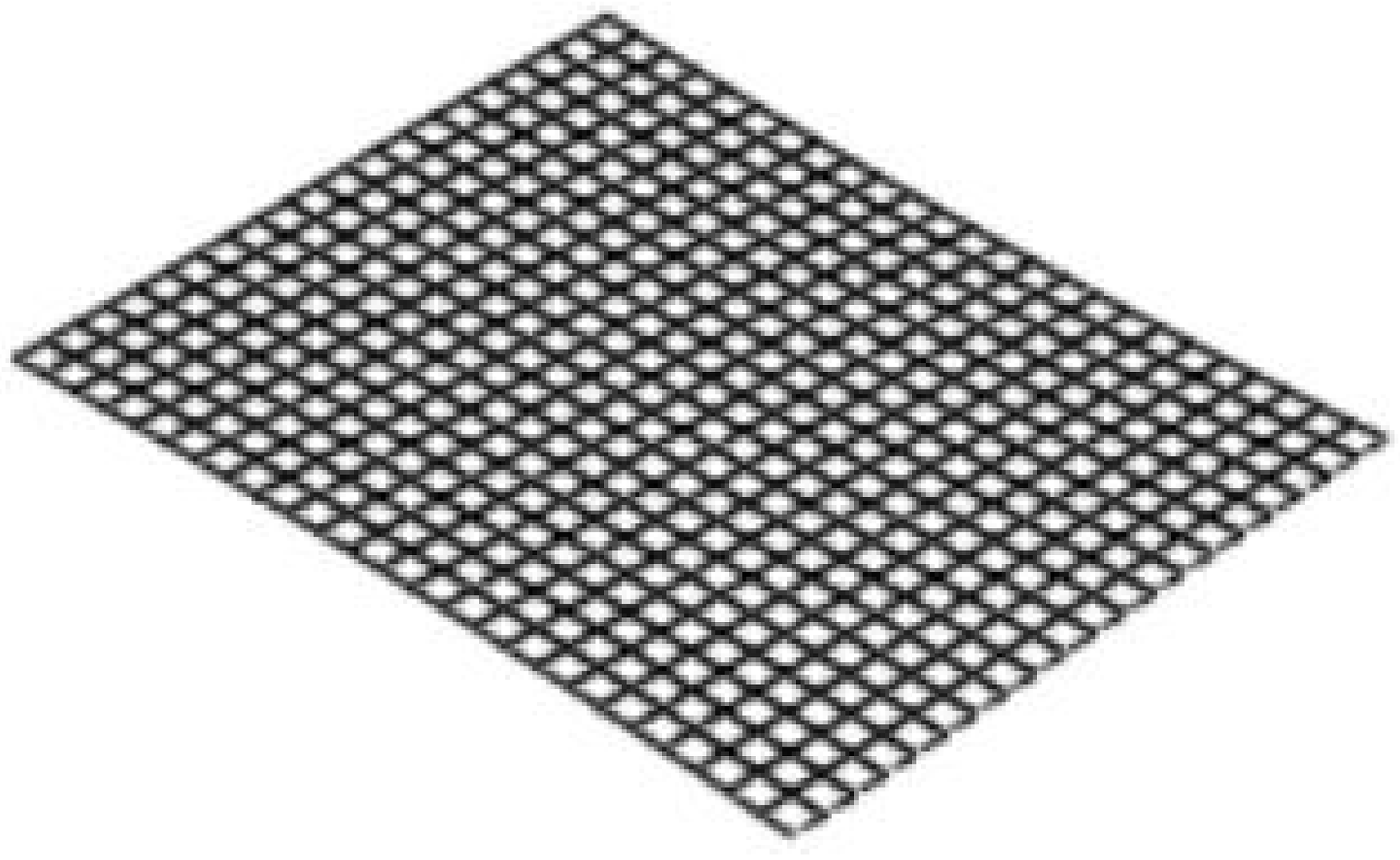}\qquad{\large $\boldsymbol{\mapsto}$}\qquad\includegraphics[width=10mm]{2worldsheet}
\quad{\large $\boldsymbol{\mapsto}$}\quad\includegraphics[width=10mm]{torus}\\
The proximally continuous mapping from a planar EEG worldsheet $\sh M$ in $\mathbb{R}^2$ to an EEG ring torus $f(\sh M)$ in $\mathbb{R}^3$ is represented in Fig.~\ref{fig:R2toTorus}.  A \emph{ring torus} is tubular surface in the shape of a donut, obtained by rotating a circle of radius $r$ (called the \emph{tube radius}) about an axis in the plane of the circle at distance $c$ from the torus center.  From example~\ref{ex:twistingEEGsignal}, each $\str A\in\sh M$ is defined by an EEG signal following along the path $D$ in the $xz$-plane.   And $\sh M$ maps to the tubular surface of a ring torus in 3-space, {\em i.e.}, there is a proximally continuous mapping from $\sh M$ in 2-space to ring torus surface $f(\sh M)$ in 3-space.   Then the surface area $S$ and volume $V$ of a ring torus~\cite[\S 8.7]{GellertEtAl1975torus} are given 
\begin{align*}
c &> r,\\
S &= 2\pi c\cdot 2\pi r = 4\pi^2 cr,\\
V &= 2\pi c\cdot \pi r^2 = 2\pi^2 cr^2.
\end{align*}
Carrying this a step further, the coordinates $x,y,z$ of the points in worldsheet strings on a ring torus can be expressed parametrically using the parametric equations in~\cite{Weisstein2016torus} as follows. 
\begin{align*}
u,v &\in [0,2\pi],c > r,\\
x &= (c + r\mbox{cos}\ v)\mbox{cos}\ u,\\
y &= (c + r\mbox{cos}\ v)\mbox{sin}\ u,\\
\mbox{twist}(r,v) &= r\mbox{sin}\ v. \mbox{\qquad \textcolor{blue}{$\blacksquare$} }
\end{align*}
\end{example}

\bibliographystyle{amsplain}
\bibliography{NSrefs}

\end{document}